\newtheorem{theorem}{Theorem}[section]
\newtheorem{lemma}[theorem]{Lemma}
\newtheorem{proposition}[theorem]{Proposition}
\newtheorem{remark}[theorem]{Remark}
\numberwithin{equation}{section}
\DeclareMathOperator{\area}{area}
\DeclareMathOperator{\Hess}{Hess}
\DeclareMathOperator{\vol}{vol}
\DeclareMathOperator{\id}{id}
\DeclareMathOperator{\Lip}{Lip}
\DeclareMathOperator{\dist}{dist}
\DeclareMathOperator{\Div}{div}
\DeclareMathOperator{\tr}{tr}
\DeclareMathOperator{\diam}{diam}
\definecolor{amber(sae/ece)}{rgb}{1.0, 0.49, 0.0}
\newfont{\rsfsten}{rsfs10 scaled 1200}
\DeclareMathOperator{\Int}{int}
\begin{document}

\title{Area and Gauss-Bonnet inequalities with scalar curvature}
\author{Misha Gromov and Jintian Zhu}
\date{\today}

\begin{abstract} Let $X$ be an $n$-dimensional Riemannian manifold with  "large positive"  scalar curvature. In this paper, we prove in a variety of cases that if $X$ "spreads"
 in $(n-2)$ directions {\it "distance-wise"},
then it {\it can't} much  "spread"
in the remaining 2-directions {\it  "area-wise".}

Here is  a  geometrically transparent example of what we plan  prove in this regard   that  illustrates the idea.

 Let $g$ be a Riemannin metric on $X= S^2\times \mathbb R^{n-2}$, for which the   submanifolds
$$\mbox {$\mathbb R_s^{n-2}=s\times \mathbb R^{n-2}\subset X$ and
$S^2_y= S^2\times y \subset X$}$$
are {\it mutually orthogonal}  at all  intersection points
$$x=(s,y)\in X=\mathbb R_s^{n-2}\cap S^2_y.$$
(An instance of this is
 $g=g(s,y)=\phi(s,y)^2ds^2+\psi(s,y)^2dy^2$.)

Let  the Riemannian metric on    $\mathbb R_s^{n-2}$ induced from $(X,g)$, that is $g|_{\mathbb R_s^{n-2}}$,  be {\it greater than the Euclidean} metric  on $\mathbb R_s^{n-2} =\mathbb R^{n-2}$ for all $s\in S^2$. (This is interpreted as  "large   spread"  of $g$  in the $(n-2)$ Euclidean directions.)

{\sf If the {\it scalar curvature of $g$ is strictly greater  than that of the unit 2-sphere},
$$Sc(g) \geq  Sc(S^2)+\varepsilon=2+\varepsilon, \mbox { }\varepsilon>0,$$
then, provided $n\leq 7$, } (this, most likely, is  unnecessary)

  {\sf there exists a smooth {\it non-contractible}   spherical  surface $S\subset X$, such that
$$area(S)<area(S^2)=4\pi.$$}

 (This says, in a way, that $(X,g)$ "doesn't   spread much   area-wise" in the 2 directions complementary to the Euclidean ones.)
\end{abstract}
\maketitle

 \section {Statement of the Main Results}
Let $X$ be a compact  Riemannian  manifold   of dimension $n\geq 3$ with smooth  boundary, denoted by $\partial X$. In the following, we divide the boundary $\partial X$ into two compact piecewisely smooth $(n-1)$-dimensional manifolds $\partial_{eff}$ and $\partial_{side}$ (possibly empty) such that they have a common boundary in $\partial X$. These boundary portions $\partial_{eff}$ and $\partial_{side}$ will be called {\it the effective boundary} and {\it the side boundary} respectively in the following discussion. \vspace {1mm}

{ \it Example.}    If  $X$ is a 3-dimensional cylinder, that is, the product of the disc $B^2$ by the unit segment,
$X=B^2\times [0,1]\subset \mathbb R^3$, then its ordinary side
$$\mbox{$S^1\times [0,1]\subset \partial (B^2\times [-0,1])=(S^1\times [0,1])\cup (B^2\times \partial [0,1])$}$$
 for $S^1=\partial B^2$  is an instance of what we call  the side boundary.  Relative to this choice of the side boundary, the union of the top and bottom faces $B^2\times\{1\}\cup B^2\times\{0\}$ serves as the effective boundary.\vspace {2mm}

Let  
$$f: (X,\partial_{eff})\to\left([-1,1]^{n-2},\partial [-1,1]^{n-2}\right)$$
be a continuous  map from $X$ to the $(n-2)$-cube such that $f$ sends the effective boundary to the boundary of the cube.
 Let $h\in H_2(X, \partial_{side})$ be the relative homology class of the point pullback of $f$,  that is
  $$\mbox {$h=f^\ast[t]$, for $t\in\Int[-1,1]^{n-2} =[-1,1]^{n-2}\setminus \partial[-1,1]^{n-2}$.}$$
To see the homology class belongs to $H_2(X,\partial_{side})$, we recall the following Poincar\'e duality 
$$
D:H^k(X,\partial_{eff})\to H_{n-k}(X,\partial_{side}).
$$
Now $h$ is homologous to the homology class $D(f^*\omega)$ in $H_2(X,\partial_{side})$, where $\omega$ is the generator of $H^{n-2}\left([-1,1]^{n-2},\partial [-1,1]^{n-2}\right)$. On the other hand, there is also a nice interpretation from differential topology.
For instance, if $f$ is a smooth map, then the pullback of a generic point $t\in \Int[-1,1]^{n-2}$ is a smooth  oriented surface
  $\Sigma_t=f^{-1} (t)\subset X$ with boundary $\partial \Sigma_t\subset \partial_{side}$ and $h$ is equal to the relative homology class of this $\Sigma_t$.
\vspace {1mm}

  Let $\partial_{\mp,i}\subset \partial [-1,1]^{n-2} $, $i=1,...,n-2$, be the pairs of opposite faces of the cube, let
   $\partial_{\mp,i}(X)\subset\partial_{eff}$, denote their $f$-pullbacks,
 \begin{equation}\label{Eq: definition opposite faces}
 \partial_{\mp,i}X=f^{-1}(\partial_{\mp,i})\cap\partial_{eff}\subset \partial_{eff}
 \end{equation}
 and let $d_i$ denote the distances between these "faces" in $X$,
$$d_i=\dist(\partial_{-,i}X,\partial_{+,i}X),\quad i=1,...,n-2.$$

  \vspace{1mm}

 Now  we are ready  to  state our main result, where there is no, a priori, lower bound on the scalar curvature and  that is proven with a use  of {\it stable $\mu$-bubbles}, which we  do only    for  $n\leq 7$  in the present paper.

  \vspace{1mm}

  \textbf {Theorem 1.1.}  {\it
The relative homology class $h$ can be represented by a smooth embedded surface $\Sigma\subset X$,  the   boundary of which is  contained in
$\partial_{ side}$  and   such that  the integrals of the scalar curvature  of $X$ over  all connected components $S$ of $\Sigma$ and of the mean curvature\footnote{Our sign convention is such that the boundaries of {\it convex} domains have {\it positive} mean curvatures.}   of $\partial_{side}$ over $\Theta=\partial S $ satisfy:
\begin{equation}\label{Eq: GB inequality}
\begin{split}
 \int_S  Sc(X,s)ds+ 2\int_{ \Theta} mean.&curv(\partial_{side}, \theta)d \theta \\
 &\leq 4\pi\chi( S)+ C_n(d_i)\cdot area(S),
 \end{split}
\end{equation}
where $\chi( S) $ is the Euler characteristics of $S$ and
$$C_n(d_i)=\frac {4(n-1)\pi^2}{n}\cdot\sum _{i-1}^{n-2} \frac {1}{d^2_i}.$$}
\begin{remark}
In fact, this theorem only makes sense when the scalar curvature and the mean curvature are non-negative. Otherwise, we can always find a compact surface in the given relative homology class such that the integral
$$
\int_S  Sc(X,s)ds
$$
or
$$
\int_{ \Theta} mean.curv(\partial_{side}, \theta)d \theta
$$
becomes arbitrarily negative and so the desired inequality \eqref{Eq: GB inequality} holds trivially. Even though, such surface may be ruled out if we require its interior and boundary to be ``relatively flat'' around the region where the scalar curvature or the mean curvature is negative. There may be a rough analogy of inequality \eqref{Eq: GB inequality} holds if we further require a curvature bound for the portion of surface and its boundary near the negative region.
\end{remark}
\vspace {2mm}

     \hspace {35mm}        {\sc Examples of Corollaries.} \vspace {2mm}

Let $X$ and $X_i$,   $ i=0,1,...,m$, be connected  orientable   Riemannian manifolds, possibly non-compact and with boundaries,  where $X_0$ is  a compact surface   ($dim(X_0)=2$) and let  $f_i: X\to X_i$ be  continuous maps, such that
the map $$f=(f_0,...,f_m): X\to X_0\times ...\times X_m$$
 is  proper (boundary to boundary, infinity-to-infinity) and it
has  {\it non-zero degree}, e.g. $f$ is a homeomorphism.

 Let the boundary of $X$ have {\it non-negative mean curvature}, let  the scalar curvature of $X$ be {\it bounded from below} by $Sc(X)\geq \sigma>0$ and let us finally assume that $n=dim(X)\leq 7$.

\vspace {1mm}

 \textbf {Corollary 1.2.} {\sf  Let $X_i$,   $i=1,...,m$, be  compact without boundaries,  {\it iso-enlargeable} }(see below) {\sf manifolds,  e.g.  they admit metrics with    non-positive sectional curvatures.}

{\it Then $X$ contains a smooth immersed surface $S\subset X$ that is either spherical non-homologous to zero,
  or it is a topological disk with boundary $\partial S\subset \partial X$, which represents a non-zero homology class  in $H_2(X, \partial X)$ and such that
  $$area(S)\leq \frac {4\pi\chi(S)}{\sigma}.$$}

 {\it Definition of iso-enlrageability.} (Compare with [Gro18].) A Riemannian manifold $Y$ is   {\it iso-enlargeable}, if,
for all $d$,
there exist

$\bullet$  compact Riemannian manifolds with boundaries $U_d$ with their dimensions
$dim(U_d)=dim(Y)$,

$\bullet$ locally isometric immersions $e_d:U_d\to Y$,

$\bullet$ proper continuous maps 
$$\phi=\phi_d:(U_d,\partial U_d )\to ([0,1]^k,\partial [0,1]^k), \quad k=dim(U_d),$$ of {\it non-zero degrees}, such that the distances between the pullbacks of the opposite faces of the cube   are bounded from  below by
$$\dist_{U_d} (\phi^{-1}(\partial _{-,i}), \phi^{-1}(\partial _{+,i}))\geq d,\quad  i=1,...,k.$$

  {\it Sketch of the Proof of  Corollary 1.2.}  (See Subsection \ref{Subsec: corollary} for details)  Since the   manifolds $X_i$, $i=1,,...,m$, are iso-enlargeable, their product $\underline X=X_1\times ... \times X_m$ is also iso-enlargeable. Therefore, there exists the above local isometric immersion
  $e_d:U_d\to \underline X$ for all $d>0$.

Denote $\underline f=(f_1,...,f_m):X\to \underline X$ and let
$\tilde e_d:  \tilde U_d\to X$ by the "$f$-pullback" of $e_d$, that is,
$  \tilde U_d\subset X\times \underline X$ is defined as the set of pairs $(x, \underline x)$, such that
$$\underline f(x)=e_d( \underline x)\mbox  { and $\tilde e_d(x, \underline x)=x.$} $$

Theorem 1.1  applied  to these  $\tilde U_d$ delivers surfaces $\tilde S_d\subset \tilde U_d$ with
$area(\tilde S_d)\leq \frac {4\pi\chi(\tilde S_d)}{\sigma}+\varepsilon_d$,  where $\varepsilon_d\to 0$ for $d\to \infty$ and where the required $S\subset X$ is obtained as a (sub)limit of $\tilde e_d(\tilde S_d)\subset X$.

\vspace {1mm}

More accurately, we are able to establish the following

\vspace {1mm}


{
\textbf {Rigidity Theorem 1.3} (Compare  with  [Zhu19].) {\it  Under assumptions of Corollary 1.2, either there exists $S$  representing a non-trivial homotopy class in $\pi_2(X,\partial X)$ with $area(S)< 4\pi\chi(S)/\sigma$ or the universal covering of $X$ is isometric to the Riemannin product, $X=S_\sigma\times \mathbb R^{n-2}$, where $S_\sigma$ is either the 2-sphere or hemisphere
  of constant curvature $\sigma/2$.
}}
 
\vspace {1mm}

The following is another instance of application of Theorem 1.1, where,  unlike Corollary 1.2. some $d_i$ are kept  bounded.\vspace {1mm}

 \textbf {Corollary 1.4.} {\sf Let   $ X_i$, $i=1,...,m,$  be   connected orientable surfaces with inradii\footnote{The {\it inradius} of a  connected Riemannin manifold $X$  is the supremun of the   numbers $R$,  for which  there exists    a closed ball $B_x(R)\subset X$, such that $B_x(R)$ is compact, doesn't intersect the boundary of $X$ and such
that it is {\it not contained} in a smaller ball $B_x(R-\varepsilon$). For instance if $X$ is compact without boundary, then $inrad(X)=diam(X)$ and if $X$ is complete non-compact then $inrad(X)=\infty$.} $inrad(X_i)\geq d_i$, let
  the maps  $f_i: X\to X_i$ be {\it distance decreasing}  and let, as earlier,  the map
 $$f=(f_0,f_1,...,f_m): X\to X_0\times...\times X_m$$
   have {non-zero} degree, where, recall, $dim(X_0)=2$.}

 {\it If the scalar curvature of $X$ is bounded from below by
$$Sc(X)  -\frac {4(n-1)\pi^2}{n}\cdot\sum _{i=1}^{m} \frac {1}{d^2_i}=\sigma_0>0,  $$
and if the boundary of $X$ has non-negative mean curvature, then there exists a smooth surface  $S\subset X$ with boundary contained in $\partial X$,  which represents a non-zero element in $H_2(X, \partial X)$,} ({\sf it can be a sphere, if, e.g. }$\partial X=\emptyset$) {\it  and  such that
$$area (S)\leq \frac {4\pi \chi(S)}{\sigma_0}.$$}

{\it Remark.}  Since  we prove everything in this paper  for $n=dim(X)\leq 7$,   the above applies only
for $m=1$ and 2,  that makes $n=m+2\leq 6$.

Not to lose  $n=7$, we may allow $X=B^2\times S^1\times X_1\times X^ 2$, with the same assumptions on $X_i$ and $f_i$, and with the same conclusion   as in  Corollary 1.2; this  also  makes no difference in the proof (see Proposition 2.1).

\vspace {1mm}

{\it Idea  of the Proof}. Since $inrad(X_i)\geq d_i$,  there exist  {\it locally distance increasing}  immersions $\tilde e_{i, \varepsilon, d}: [0,d_i-\varepsilon] \times [0,d]\to X_i$, for all positive $\varepsilon$ and $d$, that allows an application   of     (a  simple modification of)  the above sketch of the  proof  of Corollary 1.2, now with  $\varepsilon\to 0$ and  $d\to \infty$.\vspace{1mm}

We also investagate the case when the boundary of $X$ has corners.\vspace{1mm}

 \textbf {Corollary 1.5.} {\sf Let $X$ be a Riemannian manifold diffeomorphic to $X_0\times\mathbb R^{n-2} $, where $X_0$ is a planar $j$-gon. Assume that $X$ has nonnegative scalar curvature and that $\partial X$ is mean convex away from the corners. Furthermore, the dihedral angles $\angle_i$, $i=1,2,\ldots,j$ satisfies
 $$
 \angle_i\leq \alpha_i< \pi,\quad \text{and}\quad \sum_{i=1}^j(\pi-\alpha_i)>2\pi.
 $$
Then there is no proper and globally Lipschitz map $\phi:X\to \mathbb R^{n-2}$ such that $X_0$ is homologous to the $\phi$-pullback of a point.
 
 {\it Idea  of the Proof.} Smooth the manifold $X$ such that the equality
 $$
 \sum_{i=1}^j(\pi-\alpha_i)>2\pi
 $$
 becomes
 $$
 \int_{ \Theta=\partial S} mean.curv(\partial_{side}, \theta)d \theta >2\pi
 $$
 for any surface $S$ with free boundary homologous to $X_0$. The result follows from \eqref{Eq: GB inequality} with $d_i=\infty$.}

\section{Proofs of the Main Results}
In this section, we give a detailed proof for our main theorems. Essentially, the proof is based on the {\it $\mu$-bubble method} from \cite{G2019} and the idea of {\it torical symmetrization} technique in \cite{GL1983}, which originated from \cite{FS1980}.

\begin{proof}[Proof for Theorem 1.1]
The consequence is obvious if the relative homology class $h$ is trivial. In fact, we can pick up a small geodesic $2$-sphere $S$ around some point of $X$. Then the inequality holds automatically since the left hand side is almost zero and the right hand side is about $8\pi$.

In the following, we deal with the case where the relative homology class $h$ is non-trivial. Through a slight perturbation for $\partial_{-,1}X$ and $\partial_{+,1}X$, without loss of generality we can assume that they intersect the rest part of boundary $\partial X-(\partial_{-,1}X\cup\partial_{+,1}X)$ in acute angles. Also it is not difficult to construct a smooth function
$$
\phi_1:X \to \left[-\frac{d_1}{2},\frac{d_1}{2}\right]
$$
with
$$
|d\phi_1|\leq 1\quad\text{and}\quad \phi_1^{-1}\left(\pm\frac{d_1}{2}\right)=\partial_{\pm,1}X.
$$
Now we set the following minimizing problem. Let
$$\Omega_0=\{x\in X:\phi_1(x)<0\}$$
and consider the class
\begin{equation*}
\mathcal C=\{\text{Caccippoli sets $\Omega$ in $X$ such that $\Omega\Delta \Omega_0\Subset X-(\partial_{-,1}X\cup\partial_{+,1}X)$}\}.
\end{equation*}
We pick up the function
$$
h_1:\left(-\frac{d_1}{2},\frac{d_1}{2}\right)\to \mathbb R,\quad t\mapsto -\frac{2(n-1)\pi}{nd_1}\tan\left(\frac{\pi t}{d_1}\right),
$$
and define
\begin{equation}\label{Eq: brane functional}
\mathcal B(\Omega)=\mathcal H^{n-1}(\partial^*\Omega\cap \Int X)-\int_{X}(\chi_\Omega-\chi_{\Omega_0})h_1\circ\phi_1 \,\mathrm d\mathcal H^n,\quad \forall\,\Omega\in \mathcal C,
\end{equation}
where $\partial^*\Omega$ represents the reduced boundary of $\Omega$, $\Int X$ denotes the interior part of $X$ and $\chi_\Omega$ is the characteristic function for $\Omega$. From our choice for function $h_1$ and acute angles along the intersection of $\partial_{\pm,1}X$ and their complement, the boundary portions $\partial_{-,1}X$ and $\partial_{+,1}X$ serve as barriers. From geometric measure theory there exists a smooth minimizer $\Omega_1$ for functional $\mathcal B$, whose boundary $Y_1$ is a smooth embedded hypersurface with free boundary that separates $\partial_{-,1}X$ and $\partial_{+,1}X$. Notice that $\phi_1$ is homotopic to a scale of $f_1$ relative to $\partial_{-,1}\cup\partial_{+,1}$, it follows that $Y_1$ represents the relative homology class in $H_{n-1}(X,\partial X-(\partial_{-,1}X\cup\partial_{+,1}X))$ of the point pullback of the map $F_1=f_1$. Since the relative homology class $h$ is non-trivial, the hypersurface $Y_1$ has at least one component intersecting with boundary portions $\partial_{-,2}X$ and $\partial_{+,2}X$ at the same time. Collect all such components of $Y_1$ and still denote the union by $Y_1$ (since the rest components contribute nothing in our following construction). Now the boundary portion $\partial_{\pm,2}Y_1$ defined as the intersection of $\partial Y_1$ and $\partial_{\pm,2}X$ is non-empty and it is clear that $\dist(\partial_{-,2}Y_1,\partial_{+,2}Y_1)\geq d_2$.

Given any smooth variation vector field $V$ on $ Y_1$, the stability yields
\begin{equation}\label{Eq: stability}
\begin{split}
\int_{Y_1}|\nabla_{Y_1}\psi|^2-\frac{1}{2}&\left(Sc(X)-Sc(Y_1)+|\mathring A|^2+\left(\frac{n}{n-1}h_1^2+2h_1'\right)\circ \phi_1\right)\psi^2\mathrm d\sigma\\
&-\int_{\partial Y_1}II_{\partial X}(\nu,\nu)\psi^2\,\mathrm ds\geq 0,
\end{split}
\end{equation}
where $\psi=g(V,\nu)$ is the lapse function corresponding to the unit outer normal vector field $\nu$ of $Y_1$ with respect to $\Omega_1$, the notation $\mathring A$ denotes the trace-free part of the second fundamental form of $Y_1$ in $X$ and $II_{\partial X}$ is the second fundamental form of $\partial X$ in $ X$ with respect to the unit outer normal $\vec n$. Here we omit the calculation and readers can refer to \cite{G2019} for details illustrating how to obtain above inequality.
Following the idea of torical symmetrization, we construct a new warped product manifold as follows. Let $u_1$ be the first eigenfunction of the Jacobi operator (the elliptic operator corresponding to the bilinear form in \eqref{Eq: stability}). In particular, $u_1$ is a positive function satisfying
$$
-\Delta_{Y_1}u_1-\frac{1}{2}\left(Sc(X)-Sc(Y_1)+|\mathring A|^2-\frac{4(n-1)\pi^2}{nd_1^2}
\right)u_1=\lambda_1u_1\geq 0.
$$
and
$$
\frac{\partial u_1}{\partial \vec n}=II_{\partial X}(\nu,\nu)u_1,
$$
where $\lambda_1$ is the corresponding first eigenvalue and we have used the relation
$$
\frac{n}{n-1}h_1^2+2h_1'=-\frac{4(n-1)\pi^2}{nd_1^2}
$$
from our particular choice for $h_1$.
Now we define $X_1=Y_1\times \mathbb S^1$ and $g_1=g_{Y_1}+u_1^2\mathrm ds^2$.
Through a straightforward calculation we see
\begin{equation*}
\begin{split}
Sc(X_1,(y_1,\theta))&=Sc(Y_1,y_1)-2u_1^{-1}\Delta_{Y_1}u_1\\
&\geq Sc(X,y_1)-\frac{4(n-1)\pi^2}{nd_1^2},\quad y_1\in Y_1,\,\,\theta\in \mathbb S^1,
\end{split}
\end{equation*}
and
$$
mean.curv(\partial X_1,(b_1,\theta))=mean.curv(X,b_1),\quad b_1\in B_1=\partial Y_1,\,\,\theta\in \mathbb S^1.
$$

 Let $\partial_{\pm,2}X_1=\partial_{\pm,2}Y_1\times \mathbb S^1$. For boundary portions $\partial_{\pm,2}X_1$, we can repeat our previous argument with slight changes in choices for functions. In fact, we replace function $\phi_1$ by some
$$
\phi_2:Y_1\to \left[-\frac{d_2}{2},\frac{d_2}{2}\right]
$$
with
$$
|d\phi_2|\leq 1\quad\text{and}\quad \phi_2^{-1}\left(\pm\frac{d_2}{2}\right)=\partial_{\pm,2}Y_1,
$$
and replace function $h_1$ by
$$
h_2:\left(-\frac{d_2}{2},\frac{d_2}{2}\right)\to \mathbb R,\quad t\mapsto -\frac{2(n-1)\pi}{nd_2}\tan\left(\frac{\pi t}{d_2}\right).
$$
Of course, since $\phi_2$ is only a function on $Y_1$, the right way is to view $\phi_2$ as an $\mathbb S^1$-invariant function on $X_1$ when we set the minimizing problem on $X_1$. In a similar way we obtain a smooth minimizer $\Omega_2$ for functional $\mathcal B$ in \eqref{Eq: brane functional} after replacements, whose boundary $Y_2$ is a smooth embedded hypersurface with free boundary separating $\partial_{-,2}X_1$ and $\partial_{+,2}X_1$.

One more thing we need to do here is to show the $\mathbb S^1$-invariance of $Y_2$, which then yields $Y_2=\hat Y_2\times \mathbb S^1$ for some hypersurface $\hat Y_2$ in $Y_1$. Due to our definition of class $\mathcal C$, any component $S_2$ of $Y_2$ has zero algebraic intersection number with $\mathbb S^1$, then the $\mathbb S^1$-invariance follows easily from the stability (with a similar reasoning as in \cite{Z2019}). By tracking the relative homology class of $\hat Y_2$ carefully, we see that $\hat Y_2$ is homologous to the point pullback of the map
$$
F_2=(f_1,f_2):X\to [-1,1]^2.
$$
As before, we can construct a new warped product manifold $X_2$ by warping $Y_2$ and $\mathbb S^1$. In particular, we have $X_2=\hat Y_2\times T^2$,
\begin{equation*}
Sc(X_2,(\hat y_2,\Theta))\geq Sc(X,\hat y_2)-\frac{4(n-1)\pi^2}{n}\left(\frac{1}{d_1^2}+\frac{1}{d_2^2}\right),\quad \hat y_2\in \hat Y_2,\,\,\Theta\in T^2,
\end{equation*}
and
$$
mean.curv(\partial X_2,(\hat b_2,\Theta))=mean.curv(X,\hat b_2),\quad \hat b_2\in \hat B_2= \partial \hat Y_2,\,\,\Theta\in T^2.
$$

We continue above procedure until all pairs of $\partial_{\pm,i}$ are handled. Finally, we obtain a smooth embedded surface $\Sigma=\hat Y_{n-2}$ in $X$ representing the relative homology class $h$ associated with the warped product manifold
\begin{equation}\label{Eq: product manifold}
(X_{n-2},g_{n-2})=(\Sigma\times T^{n-2},  g_\Sigma+u_1^2\mathrm ds_1^2+\cdots+u_{n-2}^2\mathrm ds_{n-2}^2)
\end{equation}
satisfying
\begin{equation}\label{Eq: scalar comparison}
Sc(X_{n-2},(s,\Theta))\geq Sc(X,s)-\frac{4(n-1)\pi^2}{n}\sum_{i=1}^{n-2}\frac{1}{d_i^2},\quad s\in \Sigma,\,\Theta\in T^{n-2},
\end{equation}
and
\begin{equation}\label{Eq: mean curvature comparison}
mean.curv({\partial X_{n-2}},(b,\Theta))= mean.curv({\partial X},b),\quad b\in B=\partial \Sigma,\,\Theta\in T^{n-2}.
\end{equation}
From \eqref{Eq: product manifold} one can compute
$$
Sc( X_{n-2},(s,\Theta))=Sc(\Sigma,s)-2\sum_{i=1}^{n-2}u_i^{-1}\Delta_{\Sigma} u_i-2\sum_{1\leq i<j\leq n-2}\langle\nabla_{\Sigma}\log u_i,\nabla_{\Sigma}\log u_j\rangle
$$
and
$$
mean.curv({\partial X_{n-2}},(b,\Theta))=\kappa_{\partial \Sigma}(b)+\sum_{i=1}^{n-2}\nu(\log u_i),
$$
where $\kappa_{\partial \Sigma}$ is the geodesic curvature of boundary curve $\partial \Sigma$ in $\Sigma$ with respect to the unit outer normal $\nu$.
For any connected component $S$ of $\Sigma$, we have
\begin{equation}\label{Eq: gauss bonnet ineq warped extension}
\begin{split}
&\int_SSc( X_{n-2},(s,\Theta))\,\mathrm d\mu +2\int_{\partial S}mean.curv({\partial X_{n-2}},(b,\Theta))\,\mathrm d\sigma\\
&=\int_SSc(S,s)\,\mathrm ds+2\int_{\partial S}\kappa_{\partial S}(b)\,\mathrm db\\
&\qquad-\sum_{i=1}^{n-2} \int_{S}|\nabla_S\log u_i|^2\mathrm d\sigma- \int_{S}\left|\sum_{ i=1}^{n-2}\nabla_S\log u_i\right|^2\mathrm d\sigma\leq 4\pi\chi(S).
\end{split}
\end{equation}
The desired estimate comes from \eqref{Eq: scalar comparison} and \eqref{Eq: mean curvature comparison} substituted into \eqref{Eq: gauss bonnet ineq warped extension}.
\end{proof}

Combining the torical symmetrization technique for $\mu$-bubbles as well as minimal hypersurfaces, a similar argument leads to the following
\begin{proposition}
Let $f:X\to [-1,1]^{n-k-2}\times T^k$ be a continuous map such that it sends $X-\partial_{side}$ to the boundary of $ [-1,1]^{n-k-2}\times T^k$. Then the pullback homology class $h$ can be represented by a smooth surface $\Sigma\subset X$,  the   boundary of which is  contained in
$\partial_{ side}$  and   such that all connected components $S$ of $\Sigma$ satisfy:
$$
 \int_S  Sc(X,s)ds+ 2\int_{ \Theta} mean.curv(\partial_{side}, \theta)d \theta \leq 4\pi\chi( S)+ C_n(d_i)\cdot area(S),
$$
where $\chi( S) $ is the Euler characteristics of $S$ and
$$C_n(d_i)=\frac {4(n-1)\pi^2}{n}\cdot\sum _{i-1}^{n-k-2} \frac {1}{d^2_i}.$$
\end{proposition}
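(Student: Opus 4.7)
The plan is to interleave the $\mu$-bubble construction of Theorem 1.1 (which handles coordinate directions carrying a distance bound) with a parallel reduction based on stable free-boundary \emph{area-minimizing} hypersurfaces (which handles the $T^k$ directions, where only a topological condition is available). The two types of step both produce warped $S^1$ factors, so after $n-2$ iterations I expect a smooth embedded surface $\Sigma \subset X$ and a warped-product manifold $X_{n-2} = \Sigma \times T^{n-2}$ to which the Gauss--Bonnet calculation \eqref{Eq: gauss bonnet ineq warped extension} applies verbatim.

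For the $n-k-2$ cube directions I would proceed exactly as in the proof of Theorem 1.1: at the $i$-th step pick the prescribed mean-curvature function $h_i(t) = -\tfrac{2(n-1)\pi}{nd_i}\tan(\pi t/d_i)$, minimize the functional $\mathcal{B}$ of \eqref{Eq: brane functional} in the analogous class, and use the positive Jacobi first eigenfunction $u_i$ as a warping factor for a new $S^1$ fiber. Each such step contributes $\tfrac{4(n-1)\pi^2}{nd_i^2}$ to the scalar-curvature deficit. For each of the $k$ torus directions I would instead take a (torus-invariant) area-minimizing representative of the $H_{n-1}$-class Poincar\'e-dual to the $f$-pullback of the corresponding $S^1$ generator; such a representative is a stable free-boundary minimal hypersurface, and the first eigenfunction of its Jacobi operator again gives a warping factor for a new $S^1$ fiber. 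The crucial point is that for a genuinely minimal hypersurface stability yields
\begin{equation*}
Sc(Z \times_u S^1) \;=\; Sc(Z) - 2u^{-1}\Delta u \;\geq\; Sc(\text{ambient}) + |\mathring A|^2,
\end{equation*}
so \emph{no} scalar-curvature deficit is incurred. This is precisely why the $T^k$ factors do not appear in the constant $C_n(d_i)$. Combining the two types of reduction produces $X_{n-2} = \Sigma \times T^{n-2}$ with $Sc(X_{n-2}) \geq Sc(X) - C_n(d_i)$ and $mean.curv(\partial X_{n-2}) = mean.curv(\partial X)$, and the computation \eqref{Eq: gauss bonnet ineq warped extension} then finishes the argument on each connected component.

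The main obstacle I anticipate is the compatibility of the two symmetrizations: one must verify that each successive minimizer (whether a $\mu$-bubble or a minimal hypersurface) is invariant under the torus action already constructed. For $\mu$-bubbles this is handled in Theorem 1.1 by the Zhu-type rigidity argument from \cite{Z2019} --- each component has zero algebraic intersection with the fiber circles and must therefore split off them. I would need to check that the very same argument goes through for stable free-boundary minimal hypersurfaces sitting inside a warped product whose base is already the output of $\mu$-bubble steps, and, symmetrically, that later $\mu$-bubbles built on top of a minimal-hypersurface reduction remain invariant. A secondary bookkeeping issue is to identify the iterated pullback surface with the stated $f$-pullback class $h$; this should follow by tracking the Poincar\'e-dual class $D(f^*\omega)$ through each symmetrization exactly as in the discussion preceding Theorem 1.1.
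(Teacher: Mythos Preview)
Your proposal is correct and matches the paper's approach exactly: the paper itself gives no detailed proof of this proposition, only the single sentence ``Combining the torical symmetrization technique for $\mu$-bubbles as well as minimal hypersurfaces, a similar argument leads to the following,'' and your plan is precisely the natural unpacking of that sentence --- $\mu$-bubble steps with $h_i$ for the $n-k-2$ cube directions (each contributing $\tfrac{4(n-1)\pi^2}{nd_i^2}$), genuine area-minimizing free-boundary hypersurfaces for the $k$ torus directions (contributing zero deficit), and the Zhu-type invariance argument at each stage to preserve the accumulated $T^j$ symmetry.
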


\subsection{Proof of Corollaries}\label{Subsec: corollary}
In the following, we present the detailed proofs for various corollaries mentioned in the introduction.
\begin{proof}[Proof for Corollary 1.2]
First let us show that for any $\epsilon>0$ there is a smooth surface $S\subset X$ representing a non-trivial homology class in $H_2(X,\partial X)$, that is either a sphere or a disk with boundary $\partial S\subset \partial X$ such that
$$
\area(S)\leq \frac{4\pi\chi(S)}{\sigma}+\epsilon.
$$
Since $X_i$ is iso-enlargeable for $i=1,\ldots,m$, so is the product manifold $\underline X=X_1\times \cdots\times X_m$. From the definition, given any $d>0$ there is a compact manifold $U_d$ with non-empty boundary associated with a locally isometric immersion
$
e_d:U_d\to \underline X
$
and a proper continuous map
$
\phi_d:U_d\to [-1,1]^{n-2}
$
with non-zero degree.
Now we pull back the isometric immersion $e_d:U_d\to \underline X$ along the map $\underline f=(f_1,\ldots,f_m):X\to \underline X$. Define
$$
\tilde U_d=\{(x,u)\in X\times U_d:\underline f(x)=e_d(u)\}
$$
and
$$
\tilde e_d:\tilde U_d\to X,\quad (x,u)\mapsto x.
$$
Clearly $\tilde e_d$ is an immersion. So we can pull back the metric $g$ of $X$ on $\tilde U_d$ such that $\tilde e_d$ becomes an isometric immersion. Denote
$$
\tilde f:\tilde U_d\to U_d,\quad (x,u)\mapsto u.
$$
Then we have the following commutative diagram
\begin{equation*}
\xymatrix{
  \tilde U_d \ar[d]_{\tilde e_d} \ar[r]^{\tilde f}
                & U_d \ar[d]^{e_d}  \\
  X \ar[r]^{\underline f}
                & \underline X             .}
\end{equation*}
{ Notice that we also have an associated decomposition for the boundary $\partial \tilde U_d$, that is, we have $\partial\tilde U_d=\partial_{eff}\cup \partial_{side}$, where
$$\partial_{side}=\partial\tilde U_d\cap \tilde e_d^{-1}(\partial X)$$
and $\partial_{eff}$ is the closure of $\partial\tilde U_d-\partial_{side}$ in $\partial\tilde U_d$. Moreover, the composed map 
$$\hat f=\phi_d\circ \tilde f:(\tilde U_d,\partial_{eff})\to \left([-1,1]^{n-2},\partial [-1,1]^{n-2}\right)$$ 
allows us to apply Theorem 1.1.}
Without loss of generality, we assume that $f_i$ are smooth. In this case, map $f_i$ has bounded Lipschitz norm and the same thing holds for $\tilde f$. Let $\partial_{\pm,i}\tilde U_d$ be the boundary portion of $\tilde U_d$ defined in \eqref{Eq: definition opposite faces}. After a rescaling, we deal with the case when $\tilde f$ is distance decreasing. Clearly we have
$$
d_i=\dist_{\tilde U_d}(\partial_{-,i}\tilde U_d,\partial_{+,i}\tilde U_d)\geq d,\quad i=1,2,\ldots,n-2.
$$
Denote $h$ to be the $\hat f$-pullback of a generic point in $[-1,1]^{n-2}$. Then $h$ can be represented by a smooth embedded surface $\Sigma$ possibly with boundary $\partial\Sigma\subset \partial_{side}$ such that any of its components $S$ satisfies
$$
 \int_S  Sc(X,s)ds+ 2\int_{ \partial S} mean.curv(\partial_{side}, \theta)d \theta \leq 4\pi\chi( S)+ C_n(d_i)\cdot area(S).
$$
Combined with the facts
$$
Sc(X)\geq \sigma>0\quad \text{and}\quad mean.curv(\partial X)\geq 0,
$$
it holds
$$
\area(S)\leq 4\pi\chi(S)\left(\sigma-\frac{4(n-1)(n-2)\pi^2}{nd^2}\right)^{-1}=\frac{4\pi\chi(S)}{\sigma}+\epsilon_d
$$
for $d$ large enough, where $\epsilon_d$ is an error term converging to $0$ as $d\to+\infty$.

We need to show that there is at least one component $S_d$ of $\Sigma$ such that the image $\tilde e_d(S_d)$ represents a non-trivial relative homology class in $H_2(X,\partial X)$.
For our purpose, we consider the map
$$
\tilde F=(f_0\circ \tilde e_d,\tilde f):(\tilde U_d,\partial_{side})\to (X_0\times U_d,\partial X_0\times U_d).
$$
From the above commutative diagram, it is easy to verify the fact that $\deg \tilde F=\deg f\neq 0$. Denote $\hat e_d=(\id,e_d):X_0\times U_d\to X_0\times \underline X$. Notice that
$$
(\hat e_d)_*(\tilde F_*(h))=(\deg \tilde F\cdot \deg \phi_d)([X_0])\neq 0\in H_2(X_0\times \underline X,\partial X_0\times \underline X),
$$
we see that the relative homology class $(\tilde e_d)_*(h)$ is non-zero in $H_2(X,\partial X)$. In particular, we can pick up a component $S_d$ of $\Sigma$ whose image $\tilde e_d(S_d)$ represents a non-trivial relative homology class in $H_2(X,\partial X)$. 

{ Now we would like to complete the proof by taking the limit of $S_d$ up to a subsequence. In general, surfaces $S_d$ may not have a uniform bound on its mean curvature, but this can be overcome through a slight modification with the idea from \cite{Zhu2020b}. We will use the flexibility of the choice for functions $h_i$ in the proof of Theorem 1.1. Actually, for any $d>0$ we can construct smooth functions
$$
h_{i,d}:\left(-\frac{d}{2},\frac{d}{2}\right)\to \mathbb R,\quad i=1,2,\ldots,n-2,
$$
satisfying
\begin{itemize}
\item $h_{i,d}'(t)<0$ and
$$
\lim_{t\in\mp d/2}h_{i,d}(t)=\pm\infty.
$$
\item the quantity
$$
\sum_{i=1}^{n-2}\left(2h_{i,d}'(t_i)+\frac{n}{n-1}h_{i,d}^2(t_i)\right),\quad -\frac{d}{2}\leq t_i\leq \frac{d}{2},
$$
is positive outside $[-1,1]^{n-2}$ and no less than $-\bar\epsilon_d$ globally with $\bar\epsilon_d\to 0$ as $d\to +\infty$.
\item $h_{i,d}$ converges to zero smoothly in every compact subset of $\mathbb R$ as $d\to +\infty$.
\end{itemize}
Replace functions $h_i$ by $h_{i,d}$ in the proof of Theorem 1.1, then we can still find desired surfaces $S_d$ with $\area(S_d)\leq 4\pi\chi(S_d)/\sigma+\epsilon_d$ with $\epsilon_d\to 0$ as $d\to+\infty$. Recall that each time we work with a smooth map 
$$\phi_d=(\phi_{1,d},\phi_{2,d},\ldots,\phi_{n-2,d}): (\tilde U_d,\partial_{eff})\to\left(\left[-\frac{d}{2},\frac{d}{2}\right]^{n-2},\partial\left[-\frac{d}{2},\frac{d}{2}\right]^{n-2}\right)$$
such that each component map $\phi_{i,d}$ satisfies $\Lip\phi_{i,d}\leq 1$. Now we are going to analyze the more delicate behavior of surfaces $S_d$.

(i) If some $S_d$ does not intersect the region
$$
K_d=\phi_d^{-1}\left([-1,1]^{n-2}\right),
$$
it follows from the second property of $h_{i,d}$ that we have the modifed estimate
$$
\area(S_d)<\frac{4\pi\chi(S_d)}{\sigma}.
$$
This already delivers the desired surface and we are done.

(ii) Otherwise each $S_d$ intersects with $K_d$. It follows from the torical band estimate in \cite{G2018} (applied to the manifold after symmetrization once again) that surfaces $S_d$ have their diameters bounded by a universal constant $D$ depending only on $\sigma$. Recall that from the symmetrization procedure we obtain the slicing
$$
S_d=\Sigma_{n-2,d}\subset \Sigma_{n-3,d}\subset \cdots\subset\Sigma_{1,d}\subset \Sigma_{0,d}=\tilde U_d.
$$
Take a fixed point $p_d$ and we just need to investigate the local slicing
\begin{equation}\label{Eq: local slicing}
S_d=\Sigma_{n-2,d}\subset B^{\Sigma_{n-3,d}}_{2D}(p_d)\subset\cdots\subset B^{\Sigma_{1,d}}_{2D}(p_d)\subset B^{\Sigma_{0,d}}_{2D}(p_d),
\end{equation}
where $B_{2D}^{\Sigma_{n-3,d}}(p_d)$ is the geodesic $2D$-ball in $\Sigma_{n-3,d}$ center at $p_d$ and the rest are defined in a similar way. Since the Lipschitz norm of $\phi_{i,d}$ is no greater than one, we see that $\Sigma_{i,d}$ lies in the region $\phi_{i,d}^{-1}(-2D-1,2D+1)$ and so the mean curvatures of hypersurface $\Sigma_{i,d}\times T^{i-1}$ in $\Sigma_{i-1,d}\times T^{i-1}$ are bounded by $\delta_d$ with $\delta_d\to 0$ as $d\to +\infty$ (due to the third property of $h_{i,d}$). Similar to the discussion from \cite{SY2017} (but we don't have the singularity issue here), up to a subsequence the local slicing \eqref{Eq: local slicing} after embedded into $X$ will pointed converge to a weighted area-minimizing slicing
$$
S=\Sigma_{n-2}\subset \Sigma_{n-3}\subset \cdots\subset \Sigma_1\subset \Sigma_0.
$$
Notice that the convergence can be locally $C^{2,\alpha}$-graphical from the view of \cite{Simon1976} and the standard elliptic theory since all $\Sigma_{i,d}$ are local minimizers of functional \eqref{Eq: brane functional} and their mean curvatures have uniformly bounded $C^0$ and Lipschitz norm. As a result, $S$ is a smooth compact surface representing a non-trivial homology class in $H_2(X,\partial X)$ which satisfies
$$
\area(S)\leq \frac{4\pi\chi(S)}{\sigma}.
$$
This completes the proof.}
\end{proof}

{Before we give a proof for the Rigidity Theorem 1.3, we introduce the following generalization of Bourguignon-Kazdan-Warner small deformation theorem to manifolds with boundaries and with increase of metrics.
\begin{proposition}\label{Prop: deformation}
Let $(X,g)$ be a compact Riemannian manifold with
\begin{itemize}
\item $Sc(X)\geq \sigma$ for some constant $\sigma$, and
\item $mean.curv.(\partial X)\geq 0$.
\end{itemize} 
Then one of the following happens:
\begin{itemize}
\item[(i)] $(X,g)$ has nonnegative Ricci curvature and convex boundary, or
\item[(ii)] there are a smooth metric $g'\geq g$ and a smooth positive function $u$ on $X$ such that the warped metric $\bar g=g'+u^2\mathrm ds^2$ on $\bar X=X\times \mathbb S^1$ satisfies $Sc(\bar X)>\sigma$ and $mean.curv.(\partial\bar X)\geq 0$.
\end{itemize}
\end{proposition}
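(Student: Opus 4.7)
My plan is to reduce the dichotomy to a Robin eigenvalue problem on $(X,g)$ and to break the equality case via a small nonnegative perturbation of $g$.

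First I consider the self-adjoint elliptic problem $Lu := -2\Delta_g u + (Sc(g) - \sigma)u$ on $X$ with Robin boundary condition $\partial_\nu u + H(\partial X)u = 0$. Its smallest eigenvalue $\lambda_1$ is attained by a positive eigenfunction $u_1$, and the Rayleigh quotient
$$\lambda_1 = \inf_{u\not\equiv 0} \frac{\int_X (2|\nabla u|^2 + (Sc - \sigma)u^2)\,dv + 2\int_{\partial X} H u^2\,d\sigma}{\int_X u^2\,dv},$$
combined with the hypotheses $Sc \geq \sigma$ and $H \geq 0$, forces $\lambda_1 \geq 0$. If $\lambda_1 > 0$, I set $g' = g$ and $\bar g = g + u_1^2\,ds^2$ on $\bar X = X \times \mathbb S^1$. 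The warped-product identities $Sc(\bar g) = Sc(g) - 2u_1^{-1}\Delta u_1$ and $H(\partial \bar X) = H(\partial X) + u_1^{-1}\partial_\nu u_1$, combined with the eigenvalue equation and the Robin condition, give $Sc(\bar g) \equiv \sigma + \lambda_1 > \sigma$ and $H(\partial \bar X) \equiv 0$, which is conclusion (ii).

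Suppose instead $\lambda_1 = 0$. The infimum being attained at zero with three nonnegative integrands then forces $u_1$ constant, $Sc(g) \equiv \sigma$, and $H(\partial X) \equiv 0$. If in addition $\mathrm{Ric}(g) \geq 0$ and the boundary is convex, conclusion (i) holds. Otherwise there exists either an interior point $p$ with $\mathrm{Ric}(v,v) < 0$ for some unit $v$, or a boundary point $q$ with $II(\tau,\tau) < 0$ for some unit $\tau$ tangent to $\partial X$. In either case I perturb along $g_t = g + t h$ where $h = \varphi\cdot v^\flat \otimes v^\flat$ is the rank-one positive semi-definite tensor with $\varphi \geq 0$ a bump supported near $p$ (resp.\ $q$), so that $g_t \geq g$ holds automatically, and I aim to reduce to the previous paragraph by showing $\lambda_1(g_t) > 0$ for all small $t > 0$.

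To verify that last claim, I differentiate the Rayleigh quotient at the test function $u \equiv 1$. Since $|\nabla 1|^2$, $Sc - \sigma$, and $H$ all vanish at $t = 0$, variations of the measure contribute nothing and
$$\frac{d\lambda_1}{dt}\bigg|_{t=0}\cdot |X|_g = \int_X \dot{Sc}(h)\,dv + 2\int_{\partial X}\dot H(h)\,d\sigma.$$
Standard first-variation formulas (Einstein-Hilbert in the interior together with the Gibbons-Hawking-York boundary correction) rewrite the right-hand side as a local pointwise pairing of $h$ against the Einstein tensor in the interior and against $II_{\partial X}$ on the boundary. With $h$ concentrated against the bad eigendirection of $\mathrm{Ric}$ or $II$, this pairing is strictly positive; since the simple first eigenvalue is smooth in the metric, $\lambda_1(g_t) > 0$ for all sufficiently small $t > 0$, and one finishes by applying the first paragraph with $g' = g_t$. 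The main obstacle is exactly this perturbation step: the constraint $h \geq 0$ severely restricts the admissible directions, but the key observation is that rank-one positive semi-definite tensors $\varphi\, v^\flat \otimes v^\flat$ still suffice to detect precisely the negative eigendirections of $\mathrm{Ric}$ or of $II$ that obstruct (i).
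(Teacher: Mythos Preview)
Your proof follows the paper's strategy exactly: a Robin eigenvalue problem, warping with the first eigenfunction when $\lambda_1>0$, and when $\lambda_1=0$ perturbing the metric along a rank-one nonnegative tensor aimed at a negative eigendirection of $Ric$ or of $II$ to push $\lambda_1$ strictly positive. Two details need fixing.

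The variational pairing is against $Ric$, not the Einstein tensor. The Einstein--Hilbert/Gibbons--Hawking--York gradient is the Einstein tensor only because that formula includes the variation of the volume form; you have (correctly) frozen the measures since $Sc-\sigma$ and $H$ vanish at $t=0$, and the resulting identity reads
\[
\int_X\dot{Sc}\,dv+2\int_{\partial X}\dot H\,d\sigma=-\int_X\langle h,Ric\rangle\,dv-\int_{\partial X}\langle h,II\rangle\,d\sigma
\]
(this is the paper's Lemma~\ref{Lem: integral scalar}). If you actually paired against the Einstein tensor $Ric-\tfrac{\sigma}{2}g$, the interior sign could go wrong when $\sigma<0$ and $Ric(v,v)\in(\sigma/2,0)$.

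In the boundary case, your bump $\varphi$ supported near $q\in\partial X$ necessarily has interior mass, and the term $-\int_X\varphi\,Ric(\tau,\tau)\,dv$ may be negative (for instance once you have disposed of the interior case and may assume $Ric\geq 0$). You must thin the support normally to $\partial X$: take $h=\eta\cdot\zeta(\dist(\cdot,\partial X)/\epsilon)\cdot\tau^\flat\otimes\tau^\flat$, so the interior integral is $O(\epsilon)$ while the boundary integral is a fixed positive number, and then choose $\epsilon$ small. The paper does exactly this in its Case~2.
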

\begin{proof}
In this proof, we will show that $(X,g)$ has nonnegative Ricci curvature and convex boundary under the assumption that (ii) is false.

{\it Step 1. $X$ has constant scalar curvature $\sigma$ and vanishing mean curvature.} Otherwise, we take $g'=g$ and show the existence of the desired function $u$. Consider the following functional
\begin{equation}\label{Eq: functional}
Q(v)=\frac{\int_{X}|\nabla v|^2+\frac{1}{2}Sc(X)v^2 d\mu+\int_{\partial X}mean.curv.(\partial X)v^2d\sigma}{\int_X v^2\mathrm d\mu}
\end{equation}
defined on $C^\infty(X)$. Clearly we see $Q(v)\geq \sigma/2$ from the facts $Sc(X)\geq \sigma$ and $mean.curv.(\partial X)\geq 0$. Take the first eigenvalue $\lambda$ and the first eigenfunction $u$ of the functional $Q$ with 
$$\int_Xu^2\mathrm d\mu=1.$$ 
Then we have 
$$
-\Delta_gu+\frac{Sc(X)}{2}u=\lambda u\quad\text{in}\quad X\quad\text{with}\quad\lambda \geq\frac{\sigma}{2},
$$
and
$$
\frac{\partial u}{\partial\nu}+mean.curv.(\partial X)u=0\quad\text{on}\quad \partial X,
$$
where $\nu$ is the outer unit normal of $\partial X$ in $X$.
It is standard that the first eigenfunction $u$ is positive. As a result, if the scalar curvature $Sc(X)$ is strictly greater than $\sigma$ at some point or the mean curvature of $\partial X$ is strictly positive somewhere, then $\lambda$ is strictly greater than $\sigma/2$. From a straightforward calculation, it follows that $\bar X=X\times \mathbb S^1$ equipped with $g+u^2\mathrm ds^2$ satisfies
$$
Sc(\bar X,\bar g)=Sc(X,g)-\frac{2\Delta_g u}{u}=2\lambda>\sigma
$$
and
$$
mean.curv.(\partial \bar X)=mean.curv.(\partial X)+\frac{\partial \log u}{\partial \nu}=0.
$$
This leads to a contradiction since we have assumed that (ii) cannot happen.

{\it Step 2. $X$ has nonnegative Ricci curvature and convex boundary.} This comes from a standard deformation argument but here our manifold has non-empty boundary and we want the metric to increase. As usually, we take $h$ to be a fixed symmetric $2$-tensor on $M$ and investigate the family of metrics 
$$
g_t=g-2th.
$$
As before, we denote $\lambda_t$ to be the first eigenvalue of functional \eqref{Eq: functional} with respect to the metric $g_t$ and $u_t$ to be the corresponding first eigenfunction with
$$
\int_X u_t^2\,\mathrm d\mu_t=\vol(X,g)
$$
and the Robin boundary condition
$$
\frac{\partial u}{\partial\nu}+mean.curv.(\partial X)u=0\quad\text{on}\quad \partial X.
$$
Due to the facts $Sc(X)\equiv\sigma$ and $mean.curv.(\partial X)\equiv 0$, we see $\lambda_0=\sigma$ and $u_0\equiv 1$ at time $t=0$. Recall
\[
\begin{split}
\lambda_t=\vol(X,g)^{-1}\left(\int_X|\nabla_t u_t|^2+\frac{1}{2}\right. &Sc(X,g_t)u_t^2\mathrm d\mu_t\\&+\left.\int_{\partial X}mean.curv.(\partial X,g_t)u_t^2\mathrm d\sigma_t\right).
\end{split}
\]
After taking derivative along $t$ and substituting $u_0\equiv 1$, we obtain
\begin{equation}\label{Eq: derivative eigenvalue}
\begin{split}
\vol(X,g)\lambda_t'(0)=&\frac{1}{2}\int_X\left.\frac{\partial}{\partial t}\right|_{t=0}Sc(X,g_t)\,\mathrm d\mu_g\\
&+\int_{\partial X}\left.\frac{\partial}{\partial t}\right|_{t=0}mean.curv.(\partial X,g_t)\,\mathrm d\sigma.
\end{split}
\end{equation}

In the following, we are going to use
\begin{lemma}\label{Lem: integral scalar}
Let $\{g_t\}_{-\epsilon<t<\epsilon}$ be a smooth family of metric on a compact manifold $X$ with $g'_t(0)=-2h$. Then we have
\begin{equation*}
\begin{split}
\frac{1}{2}\int_X\left.\frac{\partial}{\partial t}\right|_{t=0}Sc(X,g_t)\,\mathrm d&\mu_g+\int_{\partial X}\left.\frac{\partial}{\partial t}\right|_{t=0}mean.curv.(\partial X,g_t)\,\mathrm d\sigma_g\\
&=\int_X\langle h,Ric(X)\rangle\,\mathrm d\mu_g+\int_{\partial X}\langle h,A(\partial X)\rangle\,\mathrm d\sigma_g,
\end{split}
\end{equation*}
where $A(\partial X)$ is the second fundamental form of $\partial X$ in $(X,g_0)$ with respect to the outer unit normal.
\end{lemma}
Here we just leave the proof of this lemma to Appendix \ref{Append: B} and let us continue the previous proof. Clearly, the equation \eqref{Eq: derivative eigenvalue} now becomes
\begin{equation}\label{Eq: integral scalar 2}
\vol(X,g)\lambda_t'(0)=\int_X\langle h,Ric(X)\rangle\,\mathrm d\mu_g+\int_{\partial X}\langle h,A(\partial X)\rangle\,\mathrm d\sigma_g.
\end{equation}
Now let us show the validity of (i) through a contradiction argument and we make the following discussion:

{\it Case 1. The Ricci curvature $Ric(X)$ is negative at some point $p$.} From the continuity we can assume $p$ to be an interior point of $X$ without loss of generality. Since the Ricci tensor can be diagonalized, we can find an orthonormal frame $\{v_i\}_{i=1}^n$ at point $p$ such that $Ric(v_i)=\mu_iv_i$ and $\mu_1<0$. Extend this frame to some neighborhood $U$ of $p$ away from the boundary $\partial X$, still denoted by $v_i$, and denote $\{\omega_i\}_{i=1}^n$ to be the dual frame. Take a nonnegative cut-off function $\eta$ supported in $U$ such that $\eta$ is positive at $p$. Let 
$$
h=-\eta Ric(v_1,v_1)\omega_1\otimes \omega_1.
$$
Notice that the quantity $Ric(v_1,v_1)$ is negative at point $p$. From continuity we can shrink the support of $\eta$ such that the metric $g_t=g-2th$ satisfies $g_t\geq g$ for all $t>0$. Clearly, \eqref{Eq: integral scalar 2} implies
$$
\lambda_t'(0)=\vol(M,g)^{-1}\int_X\eta|Ric(v_1,v_1)|^2\mathrm d\mu>0.
$$
Therefore, for small positive $t$ there is a positive smooth function $u_t$ such that the warped metric $\bar g=g_t+u_t^2\mathrm ds^2$ satisfies
$$
Sc(\bar X,\bar g)=Sc(X,g_t)-\frac{2\Delta_t u_t}{u_t}=2\lambda_t>\sigma.
$$
Again this contradicts to our assumption at the beginning.

{\it Case 2. The second fundamental form $A(\partial X)$ is negative at some point $q$ on $\partial X$.} The argument in this case is similar to Case 1 and it suffices to construct appropriate choice for $h$. Notice that the second fundamental form $A(\partial X)$ is also diagonaliable. So we can pick up an orthonormal frame $\{v_i\}_{i=1}^{n-1}$ on $\partial X$ at point $q$ such that $A(v_i)=\mu_iv_i$ and $\mu_1<0$. Denote $v_n=\nu(q)$ and then $\{v_i\}_{i=1}^{n}$ forms an orthonormal frame of $X$. Extend this frame to a neighborhood $U$ of $q$ and denote $\{\omega_i\}_{i=1}^n$ is the corresponding dual frame. As before, we take $\eta$ to be a nonnegative cut-off function support in $U$ that is positive at $q$. Moreover, we take $\zeta:[0,+\infty)\to \mathbb R$ to be another nonnegative cut-off function such that $\zeta= 1$ around $0$ and $\zeta= 0$ outside $[0,1]$. Take a fixed smooth extension $\tilde A$ of the tensor $A(\partial X)$ to $U$. Define
$$
h=\eta\zeta\left(\frac{\dist(\cdot,\partial X)}{\epsilon}\right)\tilde A(v_1,v_1)\omega_1\otimes\omega_1.
$$
Again from continuity we can shrink the support of $\eta$ such that the metric $g_t=g-2th$ satisfies $g_t\geq g$ for all $t>0$. Now the equation \eqref{Eq: integral scalar 2} becomes
$$
\lambda_t'(0)=\vol(M,g)^{-1}\left(o(1)+\int_{\partial X}\eta|A(v_1,v_1)|^2\mathrm d\sigma\right),\quad\text{as}\quad \epsilon\to 0.
$$
Take $\epsilon$ to be small enough and we obtain $\lambda_t'(0)>0$. The rest argument is the same as before.
\end{proof}
}

{
To prove the Rigidity Theorem 1.3, we also need the following 
\begin{proposition}\label{Prop: Inradius bound}
Let $X$ be a compact Riemannian manifold (possibly with a boundary), whose universal covering splits as $\tilde X=\tilde X_0\times \mathbb R^m$ for some compact simply connected manifold $\tilde X_0$. Let $U$ be a compact Riemannian manifold with non-empty boundary $\partial U$ such that 
\begin{itemize}
\item $\dim U=\dim X$;
\item the boundary $\partial U$ admits a decomposition $\partial U=\partial_{eff}\cup\partial_{side}$, where $\partial_{eff}$ and $\partial_{side}$ are compact submanifolds in $\partial U$ with dimension $\dim U-1$ such that they have a common boundary in $\partial U$;
\item there is a locally isometric map $e:(U,\partial_{side})\to (X,\partial X)$. 
\end{itemize}
Assume that $f:(U,\partial_{eff})\to (B^k(R),\partial B^k(R))$ is a proper smooth 1-Lipschitz map, where $B^k(R)$ is the $R$-ball in the Euclidean $k$-space $\mathbb R^k$, such that the relative homology class from the $f$-pullback of a generic point in $\Int B^k(R)$ is nonzero.
If $k>m$, then $R\leq const(X)$.
\end{proposition}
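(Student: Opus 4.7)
My plan is to reduce to the universal cover of $X$, exploit the splitting $\tilde X=\tilde X_0\times\mathbb R^m$ to produce a $1$-Lipschitz projection $g:\tilde U_0\to\mathbb R^m$ with ``macroscopically bounded'' fibres, and then argue that the nontriviality of the relative pullback class together with $k>m$ forces $R$ to be bounded by a constant depending only on $X$.

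First I would pull back the universal covering $\pi_X:\tilde X\to X$ along $e$ to obtain a covering $p:\tilde U\to U$ equipped with an equivariant local isometry $\tilde e:\tilde U\to\tilde X$; since $B^k(R)$ is simply connected, $f\circ p$ becomes a proper $1$-Lipschitz map $\tilde f:\tilde U\to B^k(R)$. Composing $\tilde e$ with the projection $\pi_2:\tilde X\to\mathbb R^m$ yields a $1$-Lipschitz map $g:=\pi_2\circ\tilde e:\tilde U\to\mathbb R^m$. The crucial geometric fact I would establish is that on any connected component $\tilde U_0$ of $\tilde U$ there is a constant $C_0=C_0(X)$ such that any two points of $\tilde U_0$ with $g$-values $y_1,y_2$ can be joined by a path in $\tilde U_0$ of length at most $|y_1-y_2|+C_0$. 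Such a path is produced by lifting through the local isometry $\tilde e$ a corresponding short path in $\tilde X$ (which exists because $\tilde X_0$ is compact of diameter $D_0$), and the uniform path-lifting radius comes from cocompactness of the $\pi_1(X)$-action on $\tilde X$.

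Once this ``macroscopic $m$-dimensionality'' of $\tilde U_0$ is in place, for each $y\in g(\tilde U_0)$ pick a representative $u(y)\in g^{-1}(y)$ and set $\phi(y):=\tilde f(u(y))\in B^k(R)$. The path construction together with the $1$-Lipschitz property of $\tilde f$ shows $\phi$ satisfies $d(\phi(y_1),\phi(y_2))\leq|y_1-y_2|+C_0$, so the images of fibres concentrate:
\[
\tilde f(\tilde U_0)\subset N_{C_0}\bigl(\phi(g(\tilde U_0))\bigr),
\]
a $C_0$-tube around a Lipschitz image of an $m$-dimensional subset of $B^k(R)$. On the other hand, the nontrivial relative pullback class forces $f:U\to B^k(R)$ to be surjective (its image is closed and contains a dense set of regular values by Poincar\'e--Lefschetz duality); combined with Bieberbach cocompactness of $\pi_1(X)\curvearrowright\tilde X$ and a multiplicity count across components of $\tilde U$, this surjectivity translates into a genuine $k$-volume estimate on the image of a fundamental domain. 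The tube-volume bound $\vol_k(N_{C_0}(A))\leq C(X)\cdot C_0^{k-m}\mathcal H^m(A)$ applied to $A=\phi(g(\tilde U_0))$ then yields
\[
\omega_k R^k\leq C(X)\cdot C_0^{k-m}\cdot R^m,
\]
which for $k>m$ rearranges to $R\leq\mathrm{const}(X)$.

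The main obstacle is the path-lifting claim in the second paragraph: a local isometry need not lift arbitrary paths globally, and the boundary of $\tilde U_0$ (inherited from $\partial U$ through $p$) may obstruct lifts. Controlling this in general requires either a cofinal-exhaustion argument (working on compact subsets and passing to a limit) or introducing short boundary detours contributing a bounded increment to $C_0$. The technical bookkeeping here, together with the reduction to a translational $\pi_1(X)$-action via Bieberbach so that the ``fundamental domain'' in $\mathbb R^m$ is compact, is where the bulk of the work lies; but the geometric picture is transparent: $\tilde X$ has macroscopic dimension $m$ coming from the $\mathbb R^m$-factor, so no locally isometrically immersed piece can carry a nontrivial-degree $1$-Lipschitz map to a $k$-ball of arbitrarily large radius when $k>m$.
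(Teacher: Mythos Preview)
Your overall intuition—that $\tilde U$ is ``macroscopically $m$-dimensional'' because of the splitting $\tilde X=\tilde X_0\times\mathbb R^m$, so it cannot carry a nontrivial $1$-Lipschitz map to a large $k$-ball when $k>m$—is exactly right, and this is also the geometric picture behind the paper's proof. However, the specific claim you isolate as ``crucial'' is false, and the fixes you suggest do not repair it.

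The claim in question is that any two points $u_1,u_2$ in a component $\tilde U_0$ with $g(u_i)=y_i$ can be joined in $\tilde U_0$ by a path of length $\le|y_1-y_2|+C_0(X)$. Here is a counterexample. Take $\tilde X_0=D^2$ (the flat unit disk), $m=1$, $\tilde X=D^2\times\mathbb R$. Let $A=\{1/4\le r\le 3/4\}\subset D^2$ and let $\hat A$ be the piece $\{0\le\theta\le L\}$ of its universal cover, with the pulled-back flat metric. Set $\tilde U_0=\hat A\times[0,1]$ and let $\tilde e$ be the obvious immersion into $D^2\times\mathbb R$. Then $g(r,\theta,t)=t$, and the fibre $g^{-1}(0)=\hat A\times\{0\}$ has diameter of order $L$, which can be made arbitrarily large. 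No ``short boundary detour'' helps: the two ends of $\hat A$ are genuinely far apart in $\tilde U_0$. The failure has nothing to do with exhaustions or cocompactness; it is that a local isometry into a simply connected target can still wrap many times around a non-simply-connected subdomain, producing long fibres. Consequently the inclusion $\tilde f(\tilde U_0)\subset N_{C_0}(\phi(g(\tilde U_0)))$ need not hold, and the tube-volume inequality $\omega_kR^k\le C(X)C_0^{k-m}R^m$ does not follow.

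What the paper does instead is prove a \emph{fibre alternative}: for each component $C$ of a fibre $g^{-1}(y)$, either $C$ is isometric to $\tilde X_0$, or \emph{every} point of $C$ lies within a fixed distance $Q=Q(X)$ of $\partial_{eff}$. This is a genuinely nontrivial input from quantitative topology (a Rotman-type bound on the width of null-homotopies in the simply connected $\tilde X_0$), and it is precisely what rules out the long-wrapping fibres above except near $\partial_{eff}$. Given this alternative, the deep part of $\tilde U$ acquires an honest $\tilde X_0$-bundle structure over an $m$-dimensional base; the paper then replaces $f$ by a map $F:U\to S^k$ with $\Lip F\lesssim R^{-1}$, averages $F$ along the fibres, and observes that the averaged map factors through the $m$-dimensional base, hence has measure-zero image in $S^k$ and trivial pullback class. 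Your tube-volume endgame would also work once the fibre alternative is in hand (deep part in a low-dimensional tube, shallow part in a collar of $\partial B^k(R)$), but it cannot replace that lemma—and the Bieberbach/fundamental-domain considerations you invoke are not what is missing.
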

\begin{proof}
We would like to show that the relative homology class from the $f$-pullback of a generic point has to be zero if $R$ is large enough. For our purpose, let us modify the map $f$ a little bit. Take a Lipschitz
map
$$\Phi:\left( B^k(R),\partial B^k(R)\right)\to\left( \mathbb S^{k}(1),p_0\right)$$
with $\Lip\Phi\leq C_0R^{-1}$ for some universal constant $C_0$ independent of $R$ and define $F=\Phi\circ f$. Then we see $\Lip F\leq C_0R^{-1}$ and that the $F$-pullback of a generic point represents a non-trivial relative homology class.
Furthermore, we can require that $\Phi$ maps the region outside $B^{k}(R/2)$ to the point $p_0$. As a result, $F$ takes the constant value $p_0$ inside the $(R/2)$-neighborhood of $\partial_{eff}$.
When $R$ is large enough, we plan to construct a suitable homotopy from the map $F:(U,\partial_{eff})\to (\mathbb S^{k}(1),p_0)$ to a new map $ F':( U,\partial_{eff})\to (\mathbb S^{k}(1),p_0)$, whose image has zero measure in $\mathbb S^{k}$. Once this has been done, we conclude that the $F$-pullback of a generic point is homologous to zero and this leads to a contradiction.

In practice, we work with the universal covering $\tilde U$ of $U$ and $G$-invariant maps on $\tilde U$, where $G$ is the Deck transformation group of the covering $\pi:\tilde U\to U$. The benefit is that we have nice description for the geometry of $\tilde U$ since the local isometry $e:(U,\partial_{side})\to (X,\partial X)$ can be lifted to a local isometry $\tilde e:(\tilde U,\tilde\partial_{side})\to (\tilde X,\partial\tilde X)$, where $\tilde \partial_{side}\subset \partial\tilde U$ is denoted to be the preimage $\pi^{-1}(\partial_{side})$. Denote 
$$\tilde e=(\tilde e_1,\tilde e_2):\tilde U\to \tilde X =\tilde X_0\times \mathbb R^m.$$ 
It is clear that the first component map $\tilde e_1$ restricted to every component $C$ of preimage $\tilde e_2^{-1}(y)$ for any $y\in \mathbb R^m$ is a locally isometric map to $\tilde X_0$. For every such component $C$, we can denote 
$$\partial_{C,side}=\partial C\cap\partial_{side}$$
and $\partial_{C,eff}$ to be the closure of $\partial C-\partial_{C,side}$ in $\partial C$. Then we have the local isometry $e_C:(C,\partial_{C,side})\to (\tilde X_0,\partial \tilde X_0)$.

From the quantitive topology actually we have the following alternative for these components.
\begin{lemma}\label{Lem: alternative}
Assume that $X_0$ is a compact simply connected Riemannian manifold (possibly with boundary). Let $C$ be a compact Riemannian manifold such that
\begin{itemize}
\item $\dim C=\dim X_0$;
\item the boundary $\partial C$ admits a decomposition $\partial C=\partial_{C,eff}\cup\partial_{C,side}$, where $\partial_{C,eff}$ and $\partial_{C,side}$ are compact submanifolds in $\partial C$ with dimension $\dim C-1$ such that they have a common boundary in $\partial C$;
\item there is a locally isometric map $e_C:(C,\partial_{C,side})\to (X_0,\partial X_0)$. 
\end{itemize}
Then there is a universal constant $Q$ depending only on $X_0$ such that we have the following alternative:
\begin{itemize}
\item either $C$ is isometric to $X_0$;
\item or $\partial_{C,eff}$ is non-empty and all points in $C$ are contained in the $Q$-neihborhood of $\partial_{C,eff}$.
\end{itemize}
\end{lemma}
\begin{proof}
This result follows from \cite[Theorem D]{Rotman2000}. First let us introduce the definition of width for a homotopy and recall this theorem. Let $H_\tau(t)$ be a homotopy connecting two closed curves parametrized by $t\in[0,1]$. The width of homotopy $H_\tau$ is defined to be
$$
W_{H_\tau}=\max_{t\in[0,1]}Length(H_\tau(t)).
$$
With this definition, Rotman proved that if $(X_0,g)$ is a compact simply connected Riemannian manifold satisfying
\begin{itemize}
\item sectional curvature $K\geq -1$ and diameter $d\leq D$;
\item all metric balls with radius less than $c$ are simply connected,
\end{itemize}
then there is a constant $Q_1=Q_1(\dim X_0,D,c)$ such that for any closed curve $\gamma:[0,1]\to X_0$ we can find a homotopy $H_\tau(t)$ of $\gamma(t)$ to a point whose width satisfies
$
W_{H_\tau}\leq Q_1.
$
Clearly the condition on sectional curvature can be guaranteed by rescaling with the sacrifice that the constant $Q_1$ also depends on the lower bound of sectional curvatures and this does not affect our argument below.

Notice that the covering property can be only destroyed by $\partial_{C,eff}$. As a result, if $\partial_{C,eff}$ is empty, then the local isometry $e_C:(C,\partial_{C,side})\to (X_0,\partial X_0)$ is a covering map. It then follows from the simply-connectedness of $X_0$ that $C$ is isometric to $X_0$. Next we deal with the case when $\partial_{C,eff}$ is non-empty.
Let us complete the proof of this lemma by showing a contradiction if there is a point lying outside the $(Q_1+2\diam X_0)$-neighborhood of $\partial_{C,eff}$. Passing to an equivalent metric, we may assume that $X_0$ has convex boundary and correspondingly the boundary portion $\partial_{C,side}$ is convex. Now we can pick up a point $y$ in $C$ such that $\dist(y,\partial_{C,eff})>Q_1+2\diam X_0$. Let $\gamma$ be a unit-speed minimizing geodesic connecting $y$ and the boundary portion $\partial_{C,eff}$. Clearly the geodesic $\gamma$ has length greater than $\diam X_0$ and so its image $e_C(\gamma)$ under the local isometry $e_C$ cannot be any minimizing geodesic in $X_0$. There are two possibilities:

{\it Case 1. There is a conjugate point in $e_C(\gamma)$ within distance $\diam X_0$.} It follows that there is a non-trivial Jacobi field on the geodesic $e_1(\gamma)$. Up to a lifting, the geodesic $\gamma$ also has a non-trivial Jacobi field. However, this is impossible since the geodesic $\gamma$ is minimizing.

{\it Case 2. There is a cut point in $e_C(\gamma)$ within distance $\diam X_0$.} Denote $x=e_C(y)$ and $x_c$ to be the first cut point. By definition there is another minimizing geodesic $\zeta$ connecting points $x$ and $x_c$ with length no greater than $\diam X_0$. Denote $\zeta^{-1}$ to be the inverse path of $\zeta$. The simply-connectness of $X_0$ guarantees that the closed composed curve $\beta:=\zeta^{-1}\ast e_1(\gamma)|_{xx_c}$ can be homotopic to a point with a homotopy, whose width does not exceed $Q_1$. Recall that 
$$
\dist(y,\partial_{C,eff})>Q_1+2\diam X_0,
$$ 
so the lifting property won't be destoryed by $\partial_{C,eff}$ and we can lift the closed curve $\beta$ on $C$ with the cut point $x_c$ lifted to some point $y_c$ on $\gamma$. It turns out that $y_c$ is a cut point on $\gamma$ within distance $\diam X_0$. Again this is impossible since the geodesic $\gamma$ is minimizing.
\end{proof}

Now let $\tilde \partial_{eff}$ be the closure of $\partial\tilde U-\tilde\partial_{side}$ in $\partial \tilde U$. It follows from Lemma \ref{Lem: alternative} that any component of $\tilde e_2^{-1}(y)$ must be isometric to $\tilde X_0$ if it is not contained in $Q$-neighborhood of $\tilde\partial_{eff}$. Let us collect all such $\tilde X_0$-slices with non-empty intersection with the complement $\tilde U_Q$ of $Q$-neighborhood of $\tilde\partial_{eff}$ in $\tilde U$. Since $\tilde U$ is locally isometrically immersed into $\tilde X_0\times \mathbb R^m$, the collection above induces a fiber bundle $\tilde \xi=(\tilde E,\tilde B,\tilde \pi)$, whose total space $\tilde E(\tilde\xi)$ is an open subset of $\tilde U$ containing $\tilde U_Q$ and each fiber is isometric to $\tilde X_0$. Denote $G$ to be the Deck transformation group of the covering $\pi:\tilde U\to U$. Notice that $\tilde\partial_{eff}$ is exactly the preimage $\pi^{-1}(\partial_{eff})$. Therefore, $\tilde\partial_{eff}$ as well as $\tilde U_Q$ is $G$-invariant. Since any closed totally geodesic $(n-m)$-submanifold in $\tilde X_0\times \mathbb R^m$ must be some $\tilde X_0$-slices, the fiber bundle $\tilde\xi$ is $G$-invariant. That is, any element in $G$ maps $\tilde E(\tilde \xi)$ to itself and preserves fibers. 

In the following, we are ready to construct the desired homotopy. Denote 
$$\tilde F= F\circ \pi:(\tilde U,\tilde\partial_{eff}) \to (\mathbb S^{k},p_0)$$
to be the lift of $F$ and we consider its restriction on $\tilde E(\tilde\xi)$. Since the diameter of $X_0$ is bounded, the image of each fiber concentrates in some small geodesic ball of $\mathbb S^{k}$ once $R$ is large enough. In this case, we can conduct an averaging procedure along fibers to obtain a $G$-invariant map $\tilde F':\tilde E(\tilde\xi)\to \mathbb S^k$, which is constant on each fiber. Recall that the map $F$ takes the constant value $p_0$ in $(R/2)$-neighborhood of $\partial_{eff}$ and the same thing holds for $\tilde F$ in $(R/2)$-neighborhood of $\tilde\partial_{eff}$. If $R$ is large enough, the averaged map $\tilde F'$ will take the constant value $p_0$ around $\partial\tilde E(\tilde\xi)$. Through the constant extension we finally obtain a $G$-invariant map defined on the whole $\tilde U$, still denoted by $\tilde F'$. In particular, this induces a map 
$$\tilde F':(\tilde U,\tilde \partial_{eff})\to (\mathbb S^{k},p_0).$$ When $R$ is large enough, the maps $\tilde F$ and $\tilde F'$ are close enough in $C^0$-sense. From the linear homotopy on $\mathbb S^{k}$ along minimizing geodesics, we can obtain a desired homotopy $\tilde\Phi$ between the maps $\tilde F$ and $\tilde F'$. Notice that $\tilde\Phi$ is also $G$-invariant and so it induces a homotopy $\Phi$ between the maps $F$ and $F'$, where $F'$ is given by $\tilde F'\circ\pi^{-1}$.

Now we show that the image of $F'$ has zero measure in $\mathbb S^k$. Equivalently, let us prove this for the image of $\tilde F'$. Since the map $\tilde F'$ is constant on each fiber, there is a smooth map $\tilde F'_B:\tilde B(\tilde\xi)\to \mathbb S^k$ such that $\tilde F'=\tilde F'_B\circ \tilde\pi$ and so the image of $\tilde F'$ is the same as that of $\tilde F'_B$. Notice that the dimension of $\tilde B(\tilde\xi)$ equals to $m$, which is less than $k$. So the image of $\tilde F'_B$ has zero measure in $\mathbb S^k$ and we complete the proof.
\end{proof}
}

Now we prove the desired Rigidity Theorem 1.3.

\begin{proof}[Proof of Rigidity Theorem 1.3]
All we need to show is that if there is no compact surface $S$  representing a non-trivial homotopy class in $\pi_2(X,\partial X)$ with $\area(S)< 4\pi\chi(S)/\sigma$, then $X$ splits as the Riemannian product $S_\sigma\times Y$, where $S_\sigma$ is a sphere or a hemi-sphere with constant curvature $\sigma/2$.

{First we point out that $X$ cannot admit a smooth metric $g'\geq g$ and a positive smooth function $u$ such that $(X\times \mathbb S^1,g'+u^2\mathrm ds^2)$ has mean convex boundary and scalar curvature $Sc(X\times \mathbb S^1)>\sigma$. Otherwise, it follows from the proof of Corollary 1.2 (with once more symmetrization along the extra $\mathbb S^1$-component) that there is a smooth surface $S$ in $(X,g')$ representing a non-trivial homotopy class in $\pi_2(X,\partial X)$ with area strictly less than $ 4\pi\chi(S)/\sigma$. The area of $S$ with respect to the metric $g$ can only be smaller and so we obtain a contradiction. As a consequence, Proposition \ref{Prop: deformation} yields that $(X,g)$ has nonnegative Ricci curvature and convex boundary.
}

{ Now we are going to prove the splitting of the given manifold $X$. Since $X$ has non-negative Ricci curvature and convex boundary, the Cheeger-Gromoll splitting theorem (as well as the with-boundary version in Appendix \ref{Sec: A}) yields that the universal covering $\tilde X$ of $X$ splits into $\tilde X_0\times \mathbb R^m$, where $\tilde X_0$ is a compact simply-connected Riemannian manifold with non-negative Ricci curvature and convex boundary. All we need to prove is $m\geq n-2$. If so, then $\tilde X$ must splits into $\tilde X_0\times \mathbb R^{n-2}$ since we know $\pi_2(X,\partial X)\neq 0$ from the proof of Corollary 1.2. From $Sc(\tilde X)\geq \sigma>0$ we also see that $\tilde X_0$ is a $2$-sphere with area no greater than $8\pi/\sigma$ or a disk with area no greater than $4\pi/\sigma$. In both cases, the equality implies that $\tilde X_0$ is standard. That is, $\tilde X_0$ is the round $2$-sphere or hemi-$2$-sphere with constant sectional curvature $\sigma/2$.

Recall from the proof of Corollary 1.2 that for any $d>0$ there is a compact Riemannian manifold $U_d$ with $\partial U_d=\partial_{eff}\cup \partial_{side}$ such that
\begin{itemize}
\item there is a local isometry $ e_d:( U_d,\partial_{side})\to (X,\partial X)$;
\item there is a smooth map
$$f=(f_1,f_2,\ldots, f_{n-2}):( U_d,\partial_{eff})\to \left(\left[-\frac{d}{2},\frac{d}{2}\right]^{n-2},\partial\left[-\frac{d}{2},\frac{d}{2}\right]^{n-2}\right)$$
such that $\Lip f_i\leq 1$ and that the $f$-pullback of a generic point is homologically non-trivial.
\end{itemize}  
After composed with a fixed Lipschitz map from $([-1,1]^{n-2},\partial[-1,1]^{n-2})$ to $(B^{n-2},\partial B^{n-2})$, the map $f$ induces a new map 
$$f':(U_d,\partial_{eff})\to (B^{n-2}(\hat Cd),\partial B^{n-2}(\hat Cd))$$ 
with $\Lip f'\leq 1$ for some universal positive constant $\hat C$ independent of $d$. Also, the $f'$-pullback of a generic point in $\Int B^{n-2}(R)$ is also homologically non-trivial. Since $d$ can be arbitrarily large, it follows from Proposition \ref{Prop: Inradius bound} that $m\geq n-2$. This completes the proof.}
\end{proof}

\begin{proof}[Proof for Corollary 1.4]
The proof is quite similar to that of Corollary 1.2. We deal with the case when manifolds $X$ and $X_i$ are all compact. First notice that if a surface $X_i$ has inradii $d_i$, then for any $\epsilon>0$ and $d>0$ there is a compact manifold $U_{i,\epsilon,d}$ with non-empty boundary associated with a locally isometric immersion $e_{i,\epsilon,d}:U_{i,\epsilon,d}\to X_i$ and a continuous map
$$
\phi_{i,\epsilon,d}=(\phi_{i,\epsilon,d,1},\phi_{i,\epsilon,d,2}):U_{i,\epsilon,d}\to [0,d_i-\epsilon]\times [0,d]
$$
such that $\phi_{i,\epsilon,d}$ has non-zero degree and each of its component map is distance decreasing. The construction is fairly easy and we make the following illustration as in Figure \ref{Fig: expand} instead of tedious calculation.
\begin{figure}[htbp]
\centering
\includegraphics[width=12cm]{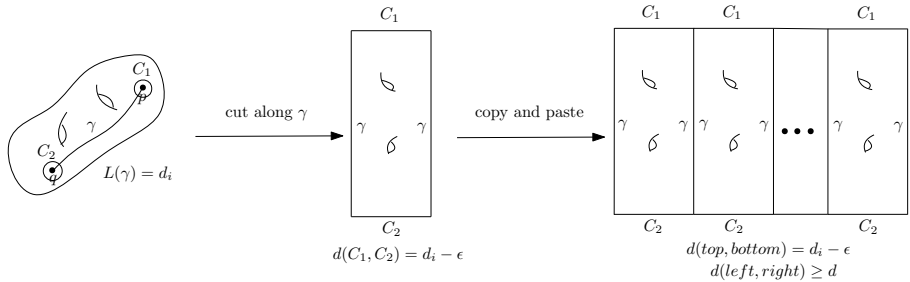}
\caption{The construction of $U_{i,\epsilon,d}$}
\label{Fig: expand}
\end{figure}\\
With these maps $\phi_{i,\epsilon,d}$, the rest of proof is very similar to that of Corollary 1.2 and we omit further details.
\end{proof}

Next we give the proof for Corollary 1.5. Roughly speaking, the following proof follows the line of the sketch mentioned in Section 1, but some technical modifications are needed since the exact meaning of $d_i=+\infty$ is that we will take $d_i$ to be arbitrarily large instead of taking $d_i$ with the value $+\infty$.
\begin{proof}[Proof for Corollary 1.5]
The proof will be divided into two steps.

{\it Step 1.} First let us give a proof with the additional assumption $Sc(X)>0$. We will go through the proof of Theorem 1.1 again, but at this time we will take auxiliary functions $h_i$ such that the quantity
$$
\frac{n}{n-1}h_i^2+2h_i'
$$
is only negative in a bounded interval instead of being a negative constant. This trick helps to guarantee the positivity of
$$
Sc(X)+\left(\frac{n}{n-1}h_i^2+2h_i'\right)\circ\phi_i
$$
even when $Sc(X)$ decays to zero at infinity. 

Now let us give the precise argument. For the sake of contridiction, we assume the existence of a proper and globally Lipschitz map $\phi:X\to \mathbb R^{n-2}$ such that $X_0$ is homologous to the $\phi$-pullback of a point. Without loss of generality, we can further assume that $\phi$ is smooth and the Lipschitz constant is less than one. Since $\phi$ is proper, the subset $X_2=\phi^{-1}([-1,1]^{n-2})$ is compact and so there is a positive constant $\delta$ such that $Sc(X)\geq \delta$ in $X_2$. Take an smooth even function $\eta:\mathbb R\to \mathbb R$ such that $\eta\equiv 0$ outside $[-1,1]$ and $-\delta<\eta<0$ in $[-1,1]$. Let $h_i$, $i=1,\ldots, n-2,$ be the solution of the following ordinary differential equation
$$
\frac{n}{n-1}h_i^2+2h_i'=\frac{\eta}{2(n-2)},\quad h_i(0)=0.
$$
It is easy to show that $h_i$ is a smooth odd function defined on a finite interval $[-d_0,d_0]$ and
$$
\lim_{t\to \mp d_0}h_i(t)=\pm \infty.
$$

In order to avoid the discussion on regularity of $S$ along corners, we have to smooth the manifold $X$ before carry on the argument from the proof of Theorem 1.1. The rounding technique here comes from \cite[P. 699]{G2018}. Given any small positive constant $\epsilon$, we denote $X_\epsilon$ to be the $\epsilon$-neighborhood of the $\epsilon$-core
$$
C_\epsilon=\{x\in X:\dist(x,\partial X)\geq \epsilon\}.
$$
As shown in Figure \ref{smooth}, for $\epsilon$ small $C_\epsilon$ is a Riemannian manifold with $j$ corners close to $X$ and the boundary of $X_\epsilon$ consists of two parts contained in $\partial X$ and the $\epsilon$-level-set of corners respectively.
\begin{figure}[htbp]
\centering
\includegraphics[width=4cm]{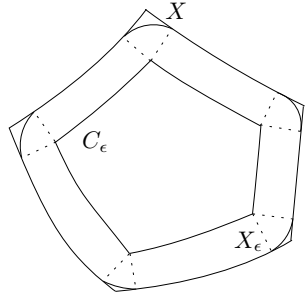}
\caption{The construction of $X_\epsilon$}
\label{smooth}
\end{figure} 

Since $\phi^{-1}([-d_0,d_0]^{n-2})$ is compact, the $\epsilon$-level-set has its mean curvature satisfying the estimate
$$
\frac{1}{\epsilon}+O(1),\quad \text{as}\quad \epsilon\to 0.
$$
On the other hand, given any closed curve in $\partial X_\epsilon$ homologous to $\partial X_\epsilon\cap X_0$, the length of the portion contained in the $\epsilon$-level-set of corners is at least
$$
\sum_{i=1}^j\left(\pi-\alpha_i+o(1)\right)\epsilon\quad\text{as}\quad \epsilon\to 0.
$$
We point out that the boundary of $X_\epsilon$ may not be smooth everywhere but one can start a mean curvature flow from $\partial X_\epsilon$ and pick up a smooth slice that can be arbitrary close to $\partial X_\epsilon$. The important property is still preserved that any closed curve as above has its portion with length no less than $
\sum_{i=1}^j\left(\pi-\alpha_i+o(1)\right)\epsilon
$
contained in boundary portions with mean curvature no less than $\epsilon^{-1}+O(\epsilon)$. We don't plan to bother the audience this technical issue here and just view $X_\epsilon$ as a smooth Riemannian manifold.

Now we can repeat the argument from the proof of Theorem 1.1 on the compact domain $\phi^{-1}([-d_0,d_0]^{n-2})$ and conclude that there exists a smooth embedded surface $S$ homologous to $X_0\cap X_\epsilon$ with free boundary $\partial S\subset \partial X_\epsilon$ such that each component $S'$ satisfies
\begin{equation*}
\begin{split}
\int_{S'}  Sc(X_\epsilon,s)+&\sum_{i=1}^{n-2}\left(\frac{n}{n-1}h_i^2+2h_i'\right)(\phi(s))ds\\
&+ 2\int_{ \Theta'=\partial S'} mean.curv(\partial X_\epsilon, \theta)d \theta \leq 4\pi\chi( S').
\end{split}
\end{equation*}
Since $S$ is homologous to $X_0\cap X_\epsilon$, there is at least one component $S'$ with non-empty boundary. Notice that the quantity
$$
Sc(X_\epsilon,s)+\sum_{i=1}^{n-2}\left(\frac{n}{n-1}h_i^2+2h_i'\right)(\phi(s))
$$
is no less than
$$
\min\left\{\frac{\delta}{2},\min_{\phi^{-1}([-d_0,d_0]^{n-2})}Sc(X_\epsilon,s)\right\}>0,
$$
so the first integral on the left hand side is positive. For the second interal, the mean-convexity of $\partial X_\epsilon$ implies
\begin{equation*}
\begin{split}
\int_{ \Theta'=\partial S'} mean.curv(\partial X_\epsilon, \theta)d \theta&\geq \left(\frac{1}{\epsilon}+O(1)\right)\left(\sum_{i=1}^j\left(\pi-\alpha_i+o(1)\right)\right)\epsilon\\
&=\sum_{i=1}^j(\pi-\alpha_i)+o(1),\quad \text{as}\quad \epsilon\to 0.
\end{split}
\end{equation*}
Let us take $\epsilon$ small enough such that
$$
\sum_{i=1}^j(\pi-\alpha_i)+o(1)>2\pi.
$$
Then we have $4\pi\chi(S')>4\pi$, which leads to a contradiction due to the fact $\chi(S')\leq 1$.

{\it Step 2.} To complete the proof, we show how to deform the scalar curvature to be positive by increasing the dihedral angle $\alpha_i$ a little bit but keeping the mean convexity of the boundary. The discussion will be divided into two cases:

{\it Case 2a.} If the mean curvature of the boundary $\partial X$ is positive somewhere, then we construct a suitable conformal deformation to increase the scalar curvature and we borrow the idea from \cite{K1982}. Let us take the exhausion $\{X_j\}_{j=1}^\infty$ of $X$ such that the boundary point with positive mean curvature is contained in each $X_j$. Consider the functional
$$
Q_j(u)=\frac{\int_{X_j}|\nabla u|^2+\frac{n-2}{4(n-1)}Sc(X)u^2\,\mathrm d\mu+\frac{n-2}{2}\int_{\partial X\cap X_j}mean.curv.(\partial X)u^2d\sigma}{\int_{X_j}u^2\,\mathrm d\mu}.
$$
Clearly there is a positive constant $\mu_j$ such that $Q_j(u)\geq \mu_j$ for any function $u$ in $C^\infty(X_j)$. As a result, we can find a positive smooth function $p$ on $X$ such that for any $u$ in $C^\infty$ it holds
\begin{equation*}
\begin{split}
\int_Xpu^2\,\mathrm d\mu\leq \int_X|\nabla u|^2+&\frac{n-2}{4(n-1)}Sc(X)u^2\,\mathrm d\mu\\
&\quad+\frac{n-2}{2}\int_{\partial X}mean.curv.(\partial X)u^2d\sigma.
\end{split}
\end{equation*}
Using this inequality we are able to construct a smooth positive function $v$ such that
$$
-\Delta v+\frac{n-2}{4(n-1)}Sc(X)v>0\quad \text{in}\quad X
$$
and
$$
\frac{\partial v}{\partial\nu}+\frac{n-2}{2}mean.curv.(\partial X)v\geq 0\quad \text{on}\quad \partial X.
$$
As in \cite[Lemma 2.9]{K1982}, we can further modify $v$ to satisfy $0<\delta\leq v\leq 1$ for some positive constant $\delta$. Define
$$
\bar g=v^{\frac{4}{n-2}}g.
$$
Then $(X,\bar g)$ has positive scalar curvature and mean convex boundary as well as the unchanged dehidral angles. Since the new metric $\bar g$ is equivalent to the original one from our construction, any globally Lipschitz map on $(X,g)$ keeps globally Lipschitz on $(X,\bar g)$. So the previous arguments can be applied to the new manifold $(X,\bar g)$ to deduce a contradiction.

{\it Case 2b.} Now we assume that the boundary $\partial X$ is minimal. In this case, a bending procedure is suggested in \cite[P. 701]{G2018} and here we plan to provide further details based on \cite{LM1984}.

\begin{figure}[htbp]
\centering
\includegraphics[width=8cm]{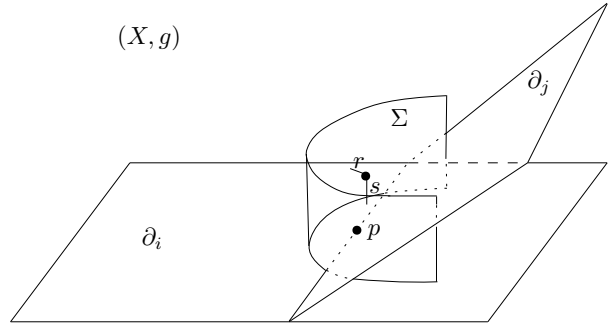}
\caption{The geometry around corners}
\label{bending}
\end{figure} 
As shown in Figure \ref{bending}, we can localize the bending around some point $p$ in the corner that is the intersection of faces $\partial_i$ and $\partial_j$. For convenience, we extend the manifold $X$ such that $\partial_i$ and $\partial_j$ stay inside the extended manifold. Take a hypersurface $\Sigma$ intersecting $\partial_i$ orthogonally such that the point $p$ lies in the tubular neighborhood of $\Sigma$. Denote $r$ and $s$ to be the signed distance function to $\Sigma$ and $\partial_i$ respectively. We make the conventions that points on the right hand side of $\Sigma$ have positive $r$-values and that those points above $\partial_i$ have positive $s$-values. Let 
$$
F(r,s)=s-f(r).
$$
It follows from \cite[P. 403]{LM1984} that the mean curvature of the hypersurface $\{F(r,s)=0\}$ with respect to the unit normal vector field $\nabla F/|\nabla F|$ is 
\begin{equation*}
\begin{split}
H_F=-\frac{f''}{W}&\left(1-\langle\nabla r,\nabla s\rangle^2\right)-\frac{f'}{W}H_r+\frac{1}{W}H_s\\
&\quad +\frac{f'}{W^3}\Hess_r(\nabla s,\nabla s)-\frac{(f')^2}{W^3}\Hess_s(\nabla r,\nabla r),
\end{split}
\end{equation*}
where $H_r$ is the mean curvature of $r$-level-set with respect to $\nabla r$, $H_s$ is the mean curvature of $r$-level-set with respect to $\nabla s$ and
$$
W=\left(1+(f')^2-2f'\langle\nabla r,\nabla s\rangle\right)^{\frac{1}{2}}.
$$
With respect to $\nabla F/|\nabla F|$ we want the mean curvature $H_F$ to be non-positive (and negative somewhere) with some appropriate choice of function $f$. The construction is as follows. Since our bending for $\partial_i$ is only designed around $p$, we can assume $|\langle\nabla r,\nabla s\rangle|\leq 1/2$ in advance. Let $r_0$ be the focal radius of $\Sigma$ and we plan to find a non-negative smooth function $f:[-r_0,r_0]\to \mathbb R $ with small values as well as
$$
0\leq f'\leq \frac{1}{2}\quad \text{and}\quad f''\geq 0.
$$
Notice that we have $|H_r|\leq C$ and $|H_s|\leq Cf$ in $X$ for some universal constant $C$ (the latter needs the fact that dihedral angles are strictly less than $\pi$) and that the Hessians of $r$ and $s$ are bounded as well. We conclude that
$$
H_F\leq -\frac{9}{8}f''+C_1f'+C_2f
$$
for some universal contants $C_1$ and $C_2$. With $r_1$ to be determined later, we define
\begin{equation*}
f(r)=\left\{
\begin{array}{cc}
e^{-\frac{1}{(r-r_1)^2}},&r> r_1;\\
0,&r\leq r_1.
\end{array}\right.
\end{equation*}
Then it follows
$$
H_F\leq e^{-\frac{1}{(r-r_1)^2}}\left(-\frac{9}{2}(r-r_1)^{-6}+\frac{27}{4}(r-r_1)^{-4}+2C_1(r-r_1)^{-3}+C_2\right),\quad r>r_1.
$$
If $r_1$ is sufficiently close to $r_0$, then we obtain the desired bent hypersurface $\{F(r,s)=0\}\cap X$ whose mean curvature with respect to unit outer normal (opposite to $\nabla F/|\nabla F|$) is non-negative (and positive somewhere). Obviously, the bending procedure above does not change dihedral angle too much and so we can still keep the angle condition
$$
 \angle_i\leq \alpha_i< \pi,\quad \text{and}\quad \sum_{i=1}^j(\pi-\alpha_i)>2\pi.
 $$
This reduces to the previous case and we complete the proof.
\end{proof}

\appendix
\section{Splitting theorem for manifolds with nonnegative Ricci and convex boundary}\label{Sec: A}
{ In this appendix, we point out that the original proof of Cheeger-Gromoll splitting theorem can also applied to show
\begin{proposition}
Let $(X^n,g)$ be a complete Riemannian manifold with nonnegative Ricci curvature and convex boundary. Assume there is a geodesic line in $X$. Then $(X,g)$ splits as a Riemannian product $(X_1,g_1)$ and the real line, where $(X_1,g_1)$ also has nonnegative Ricci curvature and convex boundary.
\end{proposition}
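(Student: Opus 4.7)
The plan is to follow the classical Cheeger--Gromoll argument via Busemann functions, with suitable modifications to deal with the boundary. Let $\gamma:\mathbb R\to X$ be the given line and define the Busemann functions of its two rays,
$$b^\pm(x)=\lim_{t\to+\infty}\bigl(t-\dist(x,\gamma(\pm t))\bigr).$$
These are globally $1$-Lipschitz, and the triangle inequality gives $b^++b^-\le 0$ on $X$ with equality along $\gamma$. The standard Laplacian comparison for distance functions (a purely local computation along minimizing geodesics in $\Int X$) passes to the limit to yield $\Delta b^\pm\ge 0$ in the viscosity sense on $\Int X$.

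Next I would apply the strong maximum principle on $\Int X$ to the subharmonic function $b^++b^-$. Since it attains its maximum $0$ at interior points of $\gamma$, it is identically $0$ on the connected interior, hence on all of $X$ by continuity. Thus $b^+=-b^-$ is both sub- and super-harmonic on $\Int X$, so by Weyl's lemma it is smooth and harmonic there. Bochner's formula together with $\Ric\ge 0$ shows $|\nabla b^+|^2$ is subharmonic on $\Int X$; since $|\nabla b^+|\le 1$ with equality on $\gamma$, the strong maximum principle forces $|\nabla b^+|\equiv 1$, and then $\Hess b^+\equiv 0$ and $\Ric(\nabla b^+,\nabla b^+)\equiv 0$ on $\Int X$. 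Standard elliptic boundary regularity (using the natural boundary condition derived below) extends $b^+$ smoothly to $\overline X=X$ with these identities preserved.

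The main obstacle is the boundary: one must show $V:=\nabla b^+$ is tangent to $\partial X$ so that its flow preserves $X$ and yields a genuine Riemannian product. Decompose $V=V^T+a\nu$ along $\partial X$, where $\nu$ is the inward unit normal. Differentiating $\nabla_{W} V=0$ for $W$ tangent to $\partial X$ and separating tangent and normal components gives
$$\Hess_{\partial X}(b^+|_{\partial X})(W_1,W_2)=a\,A(W_1,W_2),\qquad W_1(a)=-A(W_1,V^T),$$
where $A$ is the second fundamental form of $\partial X$. My preferred route is to observe that $V$ is a parallel, hence Killing, vector field on $\Int X$, so its flow acts by isometries on $\Int X$; these extend uniquely to isometries of the metric completion $X$, and any isometry of a Riemannian manifold with boundary preserves the boundary, which forces $a\equiv 0$ on $\partial X$. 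The alternative is a direct argument: if $a>0$ somewhere, one uses convexity $A\ge 0$ in the above identities to derive a contradiction from the structure of the level sets of $b^+|_{\partial X}$; this is the step where the convexity hypothesis is essential.

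Once $V$ is tangent to $\partial X$, its flow $\phi_t$ defines an $\mathbb R$-action on $X$ by isometries preserving $\partial X$. The level set $X_1:=(b^+)^{-1}(0)\subset X$ is then a smooth totally geodesic hypersurface with boundary $\partial X_1\subset\partial X$, and the map $X_1\times\mathbb R\to X$, $(y,t)\mapsto\phi_t(y)$, is a global Riemannian isometry. The splitting formula together with $\Ric_X\ge 0$ yields $\Ric_{X_1}\ge 0$, and convexity of $\partial X_1$ in $X_1$ follows from restricting the second fundamental form $A$ of $\partial X$ to directions tangent to $X_1$, completing the proof.
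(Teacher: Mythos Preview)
Your outline has two genuine gaps, both at places where the boundary forces real work beyond the closed Cheeger--Gromoll argument.

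First, you tacitly assume the line $\gamma$ meets $\Int X$ when you invoke the interior strong maximum principle for $b^++b^-$. Nothing rules out $\gamma\subset\partial X$; in that case the maximum of $b^++b^-$ is attained only on the boundary and the interior strong maximum principle gives nothing. The paper treats this case separately: it shows that the outward normal derivative of $b^++b^-$ vanishes along $\gamma$ (by passing the monotone limits defining $b^\pm$ through the difference quotient, using that $\dist(\cdot,\gamma(t))$ has zero normal derivative at points of $\gamma$), and then applies the boundary strong maximum principle to conclude $b^++b^-\equiv 0$. You also slide past the role of convexity in the Laplacian comparison itself: one needs that minimizing segments to $\gamma(t)$ are genuine geodesics not obstructed by $\partial X$, which is exactly what convexity buys.

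Second, your ``preferred route'' for showing $V=\nabla b^+$ is tangent to $\partial X$ is circular. You argue that the flow of the parallel field $V$ on $\Int X$ gives isometries which extend to the completion $X$ and hence preserve $\partial X$. But if $\langle V,\nu\rangle\neq 0$ at some $p\in\partial X$, then integral curves of $V$ through nearby interior points reach $\partial X$ in finite forward or backward time; the flow on $\Int X$ is \emph{incomplete}, so there is no global isometry to extend, and the argument collapses. The ``alternative'' you mention---using $W(a)=-A(W,V^T)$ and $A\ge 0$---is the right idea but is not carried out. The paper makes this precise by a clean concavity argument: for an integral curve $\gamma'$ of $\nabla b^+|_{\partial X}$ on $\partial X$, set $d(t)=\dist_{S_t}(\gamma\cap S_t,\gamma'\cap S_t)$ with $S_t=\{b^+=t\}$; then $d'(t)=-\langle\nu,\nabla b^+\rangle$ and $d''(t)=-|\nabla b^+|_{\partial X}|^{-4}A(\nabla b^+|_{\partial X},\nabla b^+|_{\partial X})\le 0$ by convexity. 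Since $d$ is bounded below and concave on $\mathbb R$, it is constant, so $\langle\nu,\nabla b^+\rangle\equiv 0$ along every such $\gamma'$, giving the orthogonality you need.
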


We give a sketch for the proof.
\begin{proof}[Sketch of the proof]
Since the boundary is convex, each pair of points can be connected by a length-minimizing geodesic just as in no-boundary case. This guarantees the validity of
$$
\Delta \dist(p,\cdot)\leq \frac{n-1}{\dist(p,\cdot)}
$$
in the distribution sense for any point $p\in X$. Denote $\gamma:(-\infty,+\infty)\to X$ to be the geodesic line. The Busemann functions $B^+$ and $B^-$ are defined to be
$$
B^+(x)=\lim_{t\to+\infty}\left(d_M(x,\gamma(t))-t\right)
$$
and
$$
B^-(x)=\lim_{t\to-\infty}\left(d_M(x,\gamma(t))+t\right).
$$
Denote $B=B^++B^-$. As in the no-boundary case, one can show that the function $B$ is super-harmonic in the distribution sense. Clearly we have
\begin{equation}\label{Eq: 1}
B^++B^-\geq 0\quad \text{in}\quad X
\end{equation}
and
\begin{equation}\label{Eq: 2}
B^++B^-= 0\quad \text{on}\quad \gamma.
\end{equation}

Now we show that $B$ has to be a zero function. If $\gamma$ is contained in the interior of $X$, it follows from the maximum principle that $B$ is identical to zero in $X$. If $\gamma$ is contained in the boundary $\partial X$, then we need to consider the normal derivative of $B$ along $\gamma$. For any point $x_0$, it follows from the length-minimizing property that the distance function $\dist(\cdot,\gamma(t))$ is differentiable at the point $x_0$ when $x_0\neq \gamma(t)$. And it is clear that the normal derivative
$$
\frac{\partial}{\partial \nu}\dist(\cdot,\gamma(t))
$$
vanishes at point $x_0$, where $\nu$ is the outer unit normal of $\partial X$. Take a smooth path $\zeta:[0,\epsilon]\to X$ with $\zeta(0)=x_0$ and $\zeta'(0)=-\nu(x_0)$. Based on the monotonicity of the Busemann functions $B^+$ and $B^-$, it is easy to see
$$
\limsup_{s\to 0^+}\frac{B(\zeta(s))}{s}\leq \lim_{s\to 0^+}\frac{\dist(\zeta(s),\gamma(t))-t+\dist(\zeta(s),\gamma(-t))-t}{s}=0
$$
for any fixed large $t$. On the other hand, from \eqref{Eq: 1} and \eqref{Eq: 2} we have
$$
\liminf_{s\to 0^+}\frac{B(\zeta(s))}{s}\geq 0.
$$
This means that the normal derivative of $B$ along $\gamma$ exists and equals to zero. From strong maximum principle we see that $B$ is again a zero function.

As in the closed case, this implies that $B^+$ and $B^-$ are smooth harmonic functions. From construction we have $|\nabla B^+|\equiv 1$ and it follows from Bochner formula that $\nabla^2 B^+\equiv 0$ and $Ric(X)\equiv 0$. Therefore, the function $B^+$ has no critical point and all level-sets of $B^+$ are totally geodesic. From strong maximum principle we conclude that either some component of $\partial X$ is contained in some level-set of $B^+$ or that $B^+|_{\partial X}$ also has no critical point in $\partial X$. In the first case, $X$ is diffeomorphic to $\partial X\times [0,+\infty)$ and the range of $B^+$ cannot be the entire real line. This is impossible since the image of $B^+|_\gamma$ is already the whole $\mathbf R$. In the second case, $X$ splits topologically as $X_1\times \mathbf R$, where each $X_1$-slice corresponds to a level-set of $B^+$.

Finally we show that $X$ also splits as a Riemannian manifold. It suffices to show that the level-sets of $B^+$ intersects $\partial X$ orthogonally. Let $\gamma'$ be an integral curve of vector field $\nabla B^+|_{\partial X}$ on $\partial X$. Denote $S_t$ to be the level-set $\{B^+=t\}$ diffeomorphic to $X'$. We investigate the function
$$
d(t)=\dist_{S_t}(\gamma\cap S_t,\gamma'\cap S_t).
$$
If $\gamma$ is in the interior of $X$, then $d(t)>0$ for all $t$. If $\gamma$ is contained in $\partial X$, then it is also an integral curve of vector field $\nabla B^+|_{\partial X}$ on $\partial X$. So either $\gamma'$ coincides with $\gamma$ or $d(t)>0$ for all $t$. The proof will be completed if we can show the desired orthogonality from $d(t)>0$ for all $t$. From a direct calculation we see
$$
d'(t)=-\langle\nu,\nabla B^+\rangle
$$
and 
$$
d''(t)=-\left|\nabla B^+|_{\partial X}\right|^{-4}A_{\partial X}(\nabla B^+|_{\partial X},\nabla B^+|_{\partial X})\leq 0.
$$
Since a concave function with a lower bound must be a constant, we see that $\nabla B^+$ is orthogonal to $\nu$ along $\gamma'$. From the arbitrary choice for $\gamma'$, we conclude that the level-sets of $B^+$ intersects $\partial X$ orthogonally.
\end{proof}

\section{Proof of Lemma \ref{Lem: integral scalar}}\label{Append: B}

The proof of Lemma \ref{Lem: integral scalar} will follow from a straightforward calculation (see also \cite{BC2018}) and here we include a detailed calculation for completeness. 

Let $\{g_t\}_{-\epsilon<t<\epsilon}$ be a smooth family of metrics on compact manifolds $X$ with non-empty boundary $\partial X$. Assume $g'_t(0)=-2h$. First we start with the following simple lemma.
\begin{lemma}
For any smooth vector field $U$, $V$ and $W$, we have
\begin{equation}\label{Eq: B1}
\begin{split}
g\left(\left.\frac{\partial}{\partial t}\right|_{t=0}\nabla^{g_t}_UV,W\right)&=-\left(\nabla_Uh\right)(V,W)\\
&-\left(\nabla_Vh\right)(U,W)+\left(\nabla_Wh\right)(U,V).
\end{split}
\end{equation}
\end{lemma}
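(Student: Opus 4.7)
The plan is to derive the formula from the Koszul identity applied to the varying metric $g_t$, differentiated at $t=0$. Specifically, I would start from
\[
\begin{split}
2g_t(\nabla^{g_t}_U V, W) = U g_t(V,W) &+ V g_t(W,U) - W g_t(U,V) \\
&+ g_t([U,V], W) - g_t([U,W], V) - g_t([V,W], U),
\end{split}
\]
differentiate both sides in $t$ at $t=0$, and substitute $g'_t(0) = -2h$. This yields an identity of the form
\[
2g\bigl(\partial_t|_{t=0}\nabla^{g_t}_U V,\,W\bigr) - 4h(\nabla_U V, W) = (\text{terms in } Uh(V,W),\, Vh(U,W),\, Wh(U,V),\text{ and Lie brackets of } U,V,W),
\]
with explicit coefficients coming directly from the Koszul expression.

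Next I would convert the directional derivatives of $h$ into covariant derivatives via the tensorial identity
\[
(\nabla_X h)(Y,Z) = X h(Y,Z) - h(\nabla_X Y, Z) - h(Y, \nabla_X Z),
\]
applied with the three cyclic choices of $(X;Y,Z)$ among $U,V,W$. Simultaneously, I would expand each Lie bracket using $[X,Y] = \nabla_X Y - \nabla_Y X$, so that every term involving $h$ paired with a Lie bracket is rewritten as a difference of terms of the form $h(\nabla_\cdot \cdot, \cdot)$.

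The crux is then purely a bookkeeping check: after substitution, the three $(\nabla h)$ terms appear with precisely the signs $-,-,+$ as claimed, and the numerous leftover terms of the form $h(\nabla_\cdot \cdot, \cdot)$ must combine so that they exactly cancel, modulo the single residual term $-2h(\nabla_U V, W)$, which is absorbed by the $-4h(\nabla_U V, W)$ already sitting on the left. Dividing by $2$ yields the stated identity. The only obstacle I anticipate is the sign bookkeeping in this cancellation — the symmetry of $h$ together with the antisymmetry of the bracket terms conspires to cancel six intermediate expressions in pairs, and one has to keep careful track of which $h(\nabla_\cdot\cdot,\cdot)$ corresponds to which slot.

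Since the resulting identity is tensorial in $U,V,W$ (both sides are $C^\infty$-linear in each argument, using that $\partial_t|_{t=0}\nabla^{g_t}$ is the difference of two connections and hence tensorial), no extension argument is required beyond verifying the formula for arbitrary smooth vector fields as above.
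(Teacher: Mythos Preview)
Your approach is correct and will yield the identity after the bookkeeping you describe; the cancellation of the six $h(\nabla_\cdot\cdot,\cdot)$ terms goes through exactly as you anticipate, using only the symmetry of $h$.

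The paper takes a different, shorter route: it first observes that $\partial_t|_{t=0}\nabla^{g_t}_U V$ is tensorial in $U,V$ (since $[U,V]$ is independent of $t$), and then computes directly in normal coordinates at a point, where the variation of the Christoffel symbols $\partial_t\Gamma^k_{ij} = -(\partial_i h_{jk}+\partial_j h_{ik}-\partial_k h_{ij})$ gives the formula immediately. Your Koszul-formula argument is coordinate-free and more self-contained, at the cost of the sign bookkeeping you flagged; the paper's coordinate computation avoids that bookkeeping entirely but relies on the preliminary tensoriality remark to justify working at a single point. Both are standard and equally valid.
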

\begin{proof}
First notice that we have
$$
\left.\frac{\partial}{\partial t}\right|_{t=0}[U,V]=0.
$$
This yields
$$
\left.\frac{\partial}{\partial t}\right|_{t=0}\nabla^{g_t}_UV=\left.\frac{\partial}{\partial t}\right|_{t=0}\nabla^{g_t}_VU.
$$
So the derivative
$$
\left.\frac{\partial}{\partial t}\right|_{t=0}\nabla^{g_t}_UV
$$
is a tensor and we can just make a computation in an orthonormal coordinate $\{x^i\}$ with respect to the metric $g$. It is clear that
\begin{equation*}
\begin{split}
\left.\frac{\partial}{\partial t}\right|_{t=0}\nabla^{g_t}_{\partial_i}\partial_j&=\left(\left.\frac{\partial}{\partial t}\right|_{t=0}\Gamma_{ij,t}^k\right)\partial_k\\
&=-\left(\partial_jh_{ik}+\partial_ih_{jk}-\partial_kh_{ij}\right)\partial_k.
\end{split}
\end{equation*}
This implies
\begin{equation*}
\begin{split}
g\left(\left.\frac{\partial}{\partial t}\right|_{t=0}\nabla^{g_t}_{\partial_i}\partial_j,\partial_k\right)&=-\left(\nabla_{\partial_i}h\right)(\partial_j,\partial_k)\\
&-\left(\nabla_{\partial_j}h\right)(\partial_i,\partial_k)+\left(\nabla_{\partial_k}h\right)(\partial_i,\partial_j).
\end{split}
\end{equation*}
The desired equality \eqref{Eq: B1} now follows from a simple linear combination on both sides.
\end{proof}

We recall the following fact from \cite[Theorem 1.174]{Besse2008}.
\begin{lemma}\label{Lem: B2}
We have
\begin{equation}\label{Eq: B2}
\begin{split}
\left.\frac{\partial}{\partial t}\right|_{t=0}Sc(X,g_t)=&2\langle h,Ric(X,g)\rangle_g\\
&-2\Div_g\left(\Div_gh-d\tr_gh\right).
\end{split}
\end{equation}
\end{lemma}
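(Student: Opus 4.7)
The plan is to establish \eqref{Eq: B2} by a direct tensor-calculus computation, using \eqref{Eq: B1} for the variation of the Levi-Civita connection together with the Palatini identity for the variation of the Ricci tensor, adapted throughout to the sign convention $g_t'(0)=-2h$.

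First, I would rewrite \eqref{Eq: B1} in local coordinates to extract an explicit formula for $\partial_t \Gamma^k_{ij}|_{t=0}$. Plugging in $U=\partial_i$, $V=\partial_j$, $W=\partial_k$ and raising the $W$-index via $g$, one obtains a globally defined tensor
$$
\partial_t \Gamma^k_{ij}\big|_{t=0}=-g^{kl}\bigl(\nabla_i h_{jl}+\nabla_j h_{il}-\nabla_l h_{ij}\bigr).
$$
Next, I would apply the Palatini identity
$$
\partial_t R_{ij}\big|_{t=0}=\nabla_k\bigl(\partial_t \Gamma^k_{ij}\big|_{t=0}\bigr)-\nabla_j\bigl(\partial_t \Gamma^k_{ik}\big|_{t=0}\bigr),
$$
which one verifies at a single point in $g$-normal coordinates using only the tensorial character of $\partial_t \Gamma$. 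Substituting the previous display produces $\partial_t R_{ij}|_{t=0}$ purely in covariant derivatives of $h$.

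The last step is to differentiate $Sc=g^{ij}R_{ij}$ by the product rule at $t=0$. Since $\partial_t g_{ij}|_{t=0}=-2h_{ij}$ forces $\partial_t g^{ij}|_{t=0}=2h^{ij}$, the first factor contributes $(\partial_t g^{ij})R_{ij}=2\langle h, Ric(X,g)\rangle_g$ immediately. For the remaining piece $g^{ij}\partial_t R_{ij}|_{t=0}$ I would commute $g^{ij}$ inside the covariant derivatives, which is legitimate since $\nabla g=0$: the two symmetric contractions $g^{ij}\nabla_i h_{jl}$ and $g^{ij}\nabla_j h_{il}$ assemble into $(\Div_g h)_l$, the remaining contraction produces $\tfrac12 \nabla_l \tr_g h$, and a parallel computation of $g^{ij}\partial_t \Gamma^k_{ik}|_{t=0}$ gives $\tfrac12\nabla^k \tr_g h$. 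Collecting terms, the second piece simplifies to $-2\Div_g\bigl(\Div_g h-d\tr_g h\bigr)$, which combined with the first piece yields \eqref{Eq: B2}.

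The main obstacle is purely bookkeeping: because the convention here is $g_t'(0)=-2h$ instead of the more common $g_t'(0)=h$ found in most references, every intermediate formula carries an extra factor of $-2$ that is easy to miscount when indices are raised or when $g^{ij}$ is pushed through $\nabla$. No new idea is needed beyond the classical derivation recorded in Besse, Theorem 1.174.
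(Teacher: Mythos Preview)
Your approach is correct and is exactly the classical computation that the paper has in mind: the paper does not give any argument for this lemma at all, it simply quotes \cite[Theorem 1.174]{Besse2008}. So you are supplying more than the paper does, and the route via \eqref{Eq: B1}, the Palatini identity, and the product rule on $Sc=g^{ij}R_{ij}$ is precisely the standard derivation recorded there.

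One bookkeeping correction, since you flagged this as the main risk: the $\tfrac12$ factors you wrote in the intermediate steps are leftovers from the usual convention $g_t'(0)=h$ and should not appear here. With $g_t'(0)=-2h$ and your formula $\partial_t\Gamma^k_{ij}=-g^{kl}(\nabla_i h_{jl}+\nabla_j h_{il}-\nabla_l h_{ij})$, one gets
\[
\partial_t\Gamma^k_{ik}=-\nabla_i\tr_g h,\qquad g^{ij}\partial_t\Gamma^k_{ij}=-2(\Div_g h)^k+\nabla^k\tr_g h,
\]
so that $g^{ij}\partial_t R_{ij}=-2\Div_g(\Div_g h)+2\Delta_g\tr_g h=-2\Div_g(\Div_g h-d\tr_g h)$, as claimed. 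The final formula you state is correct; only the narrated coefficients along the way need adjusting.
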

Next we compute the variation formula of the mean curvature of $\partial X$. For convenience, we take an othornormal coordinate system $\{x^i\}_{i=1}^{n-1}$ on $\partial X$ with respect to the induced metric from $(X,g)$. Denote $s$ to be the distance function to $\partial X$ in $(X,g)$. Then $\{x^i,s\}$ forms a coordinate system of $X$ around $\partial X$. We are going to prove
\begin{lemma}\label{Lem: B3}
\begin{equation}\label{Eq: B3}
\begin{split}
\left.\frac{\partial}{\partial t}\right|_{t=0}&mean.curv.(\partial X,g_t)=2\langle h,A(\partial X,g)\rangle_g\\
&-\,mean.curv.(\partial X,g)\,h(\nu,\nu)
+2(\Div_{\partial X}h)(\nu)-\tr_{\partial X}(\nabla_\nu h).
\end{split}
\end{equation}
\end{lemma}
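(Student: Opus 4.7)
The plan is to derive \eqref{Eq: B3} by direct calculation in an adapted coordinate system near $\partial X$, using the formula \eqref{Eq: B1} for the variation of the Levi--Civita connection. Fix an orthonormal coordinate chart $\{x^i\}_{i=1}^{n-1}$ on $\partial X$ (independent of $t$) together with the $g$-signed distance $s$ to $\partial X$, so that $\{x^i,s\}$ coordinatizes a neighborhood of $\partial X$ and $\partial_s=\nu$ at $s=0$. Writing $H_t$ in this frame as
$$
\mathrm{mean.curv.}(\partial X,g_t) = -\,g_t^{ij}\,g_t(\nabla^{g_t}_{\partial_i}\partial_j,\nu_t),
$$
with the sign convention that convex boundaries have $H>0$, my strategy is to differentiate each of the three factors on the right at $t=0$ separately and then collect terms.

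The first step is to determine $\nu_t$ to first order. Since $\partial X$ is fixed as a set, the orthogonality constraint $g_t(\nu_t,\partial_i)=0$ and normalization $g_t(\nu_t,\nu_t)=1$ expanded to order $t$ give $\nu_t = \nu + tV + O(t^2)$ with $V = h(\nu,\nu)\,\nu + W$, where $W$ is tangent to $\partial X$ and determined by $g(W,\partial_k)=2h(\nu,\partial_k)$. Next, $(g_t^{ij})'|_{t=0}=2h^{ij}$ contributes the term $2h^{ij}A_{ij}=2\langle h,A(\partial X)\rangle_g$ through the decomposition $\nabla^g_{\partial_i}\partial_j = \Gamma^k_{ij}\partial_k - A_{ij}\nu$. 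The derivative $g_t'=-2h$ hitting $g_t(\,\cdot\,,\nu_t)$ produces, after tracing by $g^{ij}$, the terms $-2Hh(\nu,\nu)+2g^{ij}\Gamma^k_{ij}h(\nu,\partial_k)$, while the variation of $\nu_t$ contributes $+Hh(\nu,\nu)-2g^{ij}\Gamma^k_{ij}h(\nu,\partial_k)$; the tangential $\Gamma$-terms cancel and leave $-Hh(\nu,\nu)$.

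The remaining contribution comes from $g^{ij}g((\nabla^{g_t}_{\partial_i}\partial_j)',\nu)$, which by the formula \eqref{Eq: B1} of the preceding lemma equals
$$
g^{ij}\bigl[-(\nabla_{\partial_i}h)(\partial_j,\nu)-(\nabla_{\partial_j}h)(\partial_i,\nu)+(\nabla_\nu h)(\partial_i,\partial_j)\bigr].
$$
After being inserted with the outer minus sign, the symmetry in $i,j$ yields $+2g^{ij}(\nabla_{\partial_i}h)(\partial_j,\nu)-g^{ij}(\nabla_\nu h)(\partial_i,\partial_j)$, which I then recognize as $2(\Div_{\partial X}h)(\nu)-\tr_{\partial X}(\nabla_\nu h)$ by definition of the tangential divergence. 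Adding the three contributions reproduces exactly the right-hand side of \eqref{Eq: B3}.

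The computation is entirely mechanical once the preliminary step is done; the main place one must be careful is the variation of the unit normal $\nu_t$, since the boundary is fixed but the metric is not, so the normal genuinely acquires both a normal component $h(\nu,\nu)\nu$ and a tangential correction $W$, and the latter is precisely what cancels the $g^{ij}\Gamma^k_{ij}h(\nu,\partial_k)$ terms that would otherwise spoil the clean form. After that cancellation, all the tangential curvature information is absorbed into $\langle h,A(\partial X)\rangle_g$ and the remaining differential expression collapses to the divergence/trace pair that appears in the statement.
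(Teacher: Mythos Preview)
Your proof is correct and follows essentially the same route as the paper's: both differentiate $H_t=-g_t^{ij}g_t(\nabla^{g_t}_{\partial_i}\partial_j,\nu_t)$ using \eqref{Eq: B1} and the identity $\langle\nu_t',\nu\rangle=h(\nu,\nu)$. The only cosmetic difference is that the paper works in normal coordinates on $\partial X$ at a point so that the tangential Christoffel symbols $\Gamma^k_{ij}$ vanish, whereas you keep them and observe that the $g^{ij}\Gamma^k_{ij}h(\nu,\partial_k)$ terms coming from $g_t'$ and from the tangential part $W$ of $\nu_t'$ cancel---this is the same cancellation, just made explicit rather than absorbed into the coordinate choice.
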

\begin{proof}
From the definition we know
$$
mean.curv.(\partial X,g_t)=g_t^{ij}A_{t,ij},
$$
where $A_t$ is the second fundamental form of $\partial X$ in $(X,g_t)$ with respect to the outer unit normal $\nu_t$. This implies
$$
\left.\frac{\partial}{\partial t}\right|_{t=0}mean.curv.(\partial X,g_t)=2\langle h,A(\partial X,g)\rangle_g+g^{ij}\left.\frac{\partial}{\partial t}\right|_{t=0}A_{t,ij}.
$$
Now we calculate the second term on the right hand side. From the equation \eqref{Eq: B1} we have
\begin{equation*}
\begin{split}
\left.\frac{\partial}{\partial t}\right|_{t=0}A_{t,ij}&=-\left.\frac{\partial}{\partial t}\right|_{t=0}g_t\left(\nabla^{g_t}_{\partial_i}\partial_j,\nu_t\right)\\
&=2h\left(\nabla_{\partial_i}\partial_j,\nu\right)+\left(\nabla_{\partial_i}h\right)(\partial_j,\nu)\\
&\qquad+\left(\nabla_{\partial_j}h\right)(\partial_i,\nu)-\left(\nabla_{\nu}h\right)(\partial_i,\partial_j)-g\left(\nabla_{\partial_i}\partial_j,\left.\frac{\partial}{\partial t}\right|_{t=0}\nu_t\right),
\end{split}
\end{equation*}
where $\nabla$ is the covariant derivative with respect to metric $g$ and $\nu$ is the outer unit normal of $\partial X$ in $(X,g)$. Since $\{x^i\}$ is an orthonormal coordinate system on $\partial X$, we conclude
$$
g\left(\nabla_{\partial_i}\partial_j,\left.\frac{\partial}{\partial t}\right|_{t=0}\nu_t\right)=g\left(\nabla_{\partial_i}\partial_j,\nu\right)\left\langle\left.\frac{\partial}{\partial t}\right|_{t=0}\nu_t,\nu\right\rangle_g.
$$
Taking the derivative of on both sides of the equation $g_t(\nu_t,\nu_t)\equiv 1$ with respect to $t$, we obtain
$$
\left\langle\left.\frac{\partial}{\partial t}\right|_{t=0}\nu_t,\nu\right\rangle_g=h(\nu,\nu).
$$
As a result, we see
\begin{equation*}
\begin{split}
g^{ij}\left.\frac{\partial}{\partial t}\right|_{t=0}A_{t,ij}=-&\,mean.curv.(\partial X,g)\,h(\nu,\nu)\\
&+2(\Div_{\partial X}h)(\nu)-\tr_{\partial X}(\nabla_\nu h).
\end{split}
\end{equation*}
This complete the proof.
\end{proof}

Now we are ready to prove Lemma \ref{Lem: integral scalar}.
\begin{proof}[Proof for Lemma \ref{Lem: integral scalar}]
From the divergence theorem as well as Lemma \ref{Lem: B2} and Lemma \ref{Lem: B3}, we have
\begin{equation*}
\begin{split}
\frac{1}{2}\int_X&\left.\frac{\partial}{\partial t}\right|_{t=0}Sc(X,g_t)\,\mathrm d\mu_g+\int_{\partial X}\left.\frac{\partial}{\partial t}\right|_{t=0}mean.curv.(\partial X,g_t)\,\mathrm d\sigma\\
&=\int_X\langle h,Ric(X,g)\rangle_g\,\mathrm d\mu_g+2\int_{\partial X}\langle h,A(\partial X,g)\rangle_g\,\mathrm d \sigma_g\\
&\quad+\int_{\partial X}\nu(\tr_g h)-(\Div_gh)(\nu)+2(\Div_{\partial X}h)(\nu)-\tr_{\partial X}(\nabla_\nu h)\,\mathrm d\sigma_g\\
&\qquad\qquad -\int_{\partial X}mean.curv.(\partial X,g)\,h(\nu,\nu)\,\mathrm d\sigma_g.
\end{split}
\end{equation*}
Let us deal with the terms in the third line. Clearly we have
\begin{equation*}
\begin{split}
\nu(\tr_g h)&=\nu\left(g^{ij}h(\partial_i,\partial_j)+h(\partial_s,\partial_s)\right)\\
&=-2\langle A(\partial X,g),h\rangle+\tr_{\partial X}(\nabla_vh)+2\langle h,A(\partial X,g)\rangle+\nu h(\partial_s,\partial_s)\\
&=\tr_{\partial X}(\nabla_vh)+(\nabla_\nu h)(\nu,\nu)
\end{split}
\end{equation*}
and
$$
(\Div_g h)(\nu)=(\Div_{\partial X}h)(\nu)+(\nabla_\nu h)(\nu,\nu).
$$
This yields that the integral in the third line equals to
$$
\int_{\partial X}(\Div_{\partial X} h)(\nu)\,\mathrm d\sigma_g.
$$
We compute
\begin{equation*}
\begin{split}
(\Div_{\partial X} h)(\nu)&=g^{ij}(\nabla_{\partial_i}h)(\partial_j,\nu)\\
&=g^{ij}\left(\nabla_{\partial_i}\left(h(\cdot,\nu)\right)\right)(\partial_j)-g^{ij}h(\partial_j,\nabla_{\partial_i}\nu)\\
&=\Div_{\partial X}(h(\cdot,\nu)|_{\partial X})\\
&\qquad+h(\nu,\nu)\,mean.curv.(\partial X,g)-\langle h,A(\partial X,g)\rangle.
\end{split}
\end{equation*}
Finally we arrive at
\begin{equation*}
\begin{split}
\frac{1}{2}\int_X&\left.\frac{\partial}{\partial t}\right|_{t=0}Sc(X,g_t)\,\mathrm d\mu_g+\int_{\partial X}\left.\frac{\partial}{\partial t}\right|_{t=0}mean.curv.(\partial X,g_t)\,\mathrm d\sigma\\
&=\int_X\langle h,Ric(X,g)\rangle_g\,\mathrm d\mu_g+\int_{\partial X}\langle h,A(\partial X,g)\rangle_g\,\mathrm d \sigma_g.
\end{split}
\end{equation*}
This completes the proof.
\end{proof}
}

\end{document}